\newtheorem{theorem}{Theorem}
\newtheorem{definition}[theorem]{Definition}
\newtheorem{lemma}[theorem]{Lemma}
\newtheorem{proposition}[theorem]{Proposition}
\newtheorem{question}[theorem]{Question}
\begin{document}

\title[Classification and Irreducibility of 2-Local Representations of $T_n$]{On the classification and irreducibility of $2$-Local Representations of the twin group $T_n$}

\author{Taher I. Mayassi}

\address{Taher I. Mayassi\\
Department of Mathematics and Physics\\
Lebanese International University\\
P.O. Box 146404, Beirut, Lebanon}

\email{taher.mayasi@liu.edu.lb}

\author{Mohamad N. Nasser}

\address{Mohamad N. Nasser\\
         Department of Mathematics and Computer Science\\
         Beirut Arab University\\
         P.O. Box 11-5020, Beirut, Lebanon}
         
\email{m.nasser@bau.edu.lb}

\begin{abstract}
We investigate the homogeneous $2$-local representations of the twin group $T_n$ for all integers $n\geqslant 2$. A complete classification is obtained, yielding three distinct families of representations. We show that each of these families is reducible by explicitly constructing one-dimensional invariant subspaces, with particular emphasis on the first family, namely $\xi_1: T_n \rightarrow \text{GL}_n(\mathbb{C})$. Passing to the corresponding quotients, we construct a reduced representation of $\xi_1$, namely $\tilde{\xi}_1: T_n \rightarrow \text{GL}_{n-1}(\mathbb{C})$. The core of the paper is that we establish, through a precise criterion, a necessary and sufficient condition for the irreducibility of the representation $\tilde{\xi}_1$.
\end{abstract}

\maketitle

\renewcommand{\thefootnote}{}
\footnote{\textit{Key words and phrases.} Twin group, Braid group, Local representations, Irreducibility.}
\footnote{\textit{Mathematics Subject Classification.} Primary: 20F36.}

\vspace*{-0.35cm}

\section{Introduction}

\vspace*{0.1cm}
For a positive integer $n$, the twin group $T_n$ on $n$ strands admits a topological interpretation that is similar to that of the braid groups. Consider the strip $\mathbb{R}\times[0,1]$ and fix $n$ marked points on each of the horizontal lines $y=0$ and $y=1$. A twin is represented by $n$ monotonic (descending) strands within this strip, connecting the $n$ top points to the $n$ bottom points, such that no three strands meet at the same point \cite{Khova,Nai}. The following figure is an example of a twin of $5$ strands.
\begin{center}
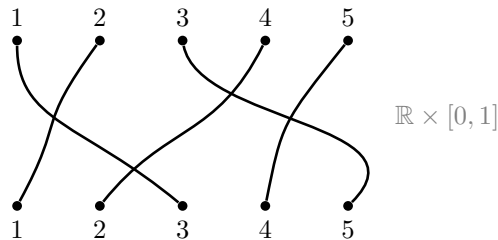
\begin{figure}[h]
\begin{tikzpicture}[scale=1.1,
  endpoint/.style={circle,fill=black,inner sep=1.3pt}]
  \foreach \i in {0,...,4}{
   \pgfmathtruncatemacro{\j}{\i+1}
    \node[endpoint,label=above:{\j}] (T\i) at (\i,2) {};
    \node[endpoint,label=below:{\j}] (B\i) at (\i,0) {};
  }
  \draw[line width=1pt] (T0) .. controls +(0, -1) and +(-1.2, 1) .. (B2);
  \draw[line width=1pt] (T1) .. controls +(-0.8, -1.1) and +(0.6,  1.1) .. (B0);
  \draw[line width=1pt] (T2) .. controls +(0.2, -1.0) and +( 1.0,  1.0) .. (B4);
  \draw[line width=1pt] (T3) .. controls +(-0.6,-1.2) and +( 0.8,  1.0) .. (B1);
  \draw[line width=1pt] (T4) .. controls +(-0.8,-1.0) and +( 0.2,  1.0) .. (B3);
  \node[gray] at (5.2,1.07) {$\mathbb{R}\times[0,1]$};
\end{tikzpicture}
\caption{$5$-strand twin}
\end{figure}
\end{center}
\vspace*{-0.3cm}
\noindent Two such configurations are equivalent if they are related by a homotopy (fixing the endpoints) that preserves this condition \cite{Khova,Nai}. The set of equivalence classes of these configurations forms a group under concatenation (stacking one configuration atop another and rescaling vertically). This group is the twin group $T_n$.

\vspace{0.2cm}

The twin groups have recently attracted considerable attention, both for their rich algebraic structure and for their connections to problems in low-dimensional topology and combinatorics. From a geometric perspective, elements of $T_n$ can be realized by doodles, that is, collections of immersed closed curves in the plane that do not admit triple intersections. Such doodle diagrams provide a natural combinatorial and topological model for the group, offering an alternative way to study its properties beyond the purely algebraic description \cite{Lopa, Khova}. In this setting, doodles are similar to braid diagrams, but the analogy is not exact: the allowed moves are different, which leads to different isotopy classes and a theory that departs from the classical braid groups.

\vspace{0.2cm}

While braid groups are classical objects with deep connections to knot theory, mapping class groups, and quantum topology \cite{Bir}, twin groups remain comparatively less explored. Algebraically, both groups share a presentation with generators $s_1,\dots,s_{n-1}$ and braid-like relations, yet in $T_n$ the braid relation $s_is_{i+1}s_i=s_{i+1}s_is_{i+1}$ is replaced by the involutivity condition $s_i^2=1$, leading to substantially different algebraic and representation-theoretic properties \cite{Lopa}.

\vspace{0.2cm}

Despite their recent introduction, some linear representations of $T_n$ have been constructed and studied \cite{Lopa_2, M.N.twin}, motivated by potential applications in topology, combinatorics, and the study of motion groups. In representation theory, two fundamental properties often investigated are faithfulness — whether the representation is injective — and irreducibility — whether the representation admits no proper non-trivial invariant subspaces. Both properties play central roles in understanding the algebraic structure of a group and in applications to topology, geometry, and mathematical physics \cite{Serr,Hump}.

\vspace{0.2cm}

A particularly interesting class of representations is given by local representations, in which each generator acts non-trivially on a small block of coordinates while fixing the rest. Such representations have been studied for braid groups, Coxeter groups, and related motion groups \cite{Funa, Cris}, with applications to link invariants, statistical mechanics, and homological representations. Classification results for local representations often produce families that are then analyzed for faithfulness, reducibility, and other structural features.

\vspace{0.2cm}

In this paper, we focus on the 2-local representations of $T_n$ for $n\geqslant 3$. In section 2 of the paper, we introduce generalities and basic results we need in our work. In section 3, we classify all such representations, obtaining three distinct families. In section 4, we show that all these representations are reducible by explicitly constructing a one-dimensional invariant subspace; in particular, we identify such subspaces for the first family $\xi_1: T_n \rightarrow \text{GL}_n(\mathbb{C})$. In addition, we focus on the reduction of the first family $\xi_1$, namely $\tilde{\xi}_1: T_n \rightarrow \text{GL}_{n-1}(\mathbb{C})$, by factoring out the corresponding invariant subspaces. Section 5, which forms the core of the paper, contains three propositions that culminate in a necessary and sufficient condition for the irreducibility of $\tilde{\xi}_1$. The main result is that we prove that the representation $\tilde{\xi}_1$ of $T_n$, $n\geqslant4$, is irreducible if and only if $a\not\in\{1,-1\}$ and $a$ is not a root of the polynomial $P(t)=4(1+t^2) + \frac{(1-t)^4}{2t} \left( 1 - \left( \frac{1-t}{1+t} \right)^{n-4} \right).$

\vspace{0.3cm}

\section{Generalities}

\vspace*{0.1cm}

For any positive integer $n\geqslant 2$, the twin group $T_n$ is presented by $n-1$ generators $s_1, \dots, s_{n-1}$ and the following relations.
$$\begin{array}{ccccl}
s_i^2 &=&1 &\text{ for } & 1\leqslant i\leqslant n-1,\\
s_is_j &=& s_js_i &\text{ for } & |i-j|>1.
\end{array}
$$
The generator $s_i$ can be described by the following diagram.
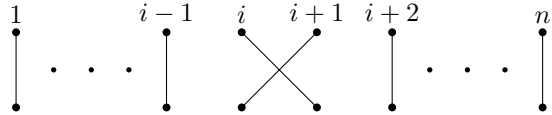
\begin{figure}[H]
 \begin{tikzpicture}
 \draw (0,0)--(0,1)(2,0)--(2,1)(5,0)--(5,1)(3,0)--(4,1)(4,0)--(3,1)(7,1)--(7,0);
 \foreach \n in {0,2,3,4,5,7} \fill (\n,1)circle(1.5pt);
 \foreach \n in {0,2,3,4,5,7} \fill (\n,0)circle(1.5pt);
 \foreach \n in {.5,1,1.5,5.5,6,6.5} \fill (\n,0.5)circle(1pt);
 \node[above] at (0,1) {$1$};
 \node[above] at (2,1) {$i-1$};
 \node[above] at (3,1) {$i$};
  \node[above] at (4,1) {$i+1$};
  \node[above] at (5,1) {$i+2$};
  \node[above] at (7,1) {$n$};
  \end{tikzpicture}
  \caption{The generator $s_i$} 
  \end{figure} 
\vspace*{-0.3cm}
\noindent A closure of a twin $t$ is a diagram obtained by joining the end points of $t$ as shown in the figure below \cite{Nai}.
\vspace*{-0.2cm}
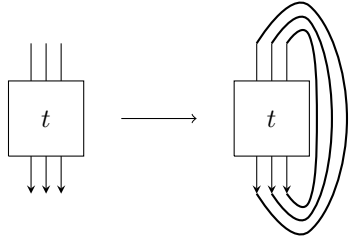
\begin{figure}[H]
\begin{tikzpicture}
\draw (0,0)--(1,0)--(1,1)--(0,1)--(0,0);
\node at (.5,.5){$t$};
\draw (0.3,1.5)--(0.3,1)(0.5,1.5)--(0.5,1)(0.7,1.5)--(0.7,1);
\draw[-stealth] (0.3,0)--(0.3,-.5);
\draw[-stealth](0.5,0)--(0.5,-0.5);
\draw[-stealth](0.7,0)--(0.7,-.5);
\draw[->] (1.5,.5)--(2.5,.5);
\draw (3,0)--(4,0)--(4,1)--(3,1)--(3,0);
\node at(3.5,.5) {$t$};
\draw (3.3,1.5)--(3.3,1)(3.5,1.5)--(3.5,1)(3.7,1.5)--(3.7,1);
\draw[-stealth] (3.3,0)--(3.3,-.5);
\draw[-stealth](3.5,0)--(3.5,-0.5);
\draw[-stealth](3.7,0)--(3.7,-.5);
\draw[thick]
  plot [smooth, tension=0.8] coordinates {
    (3.3,1.5)
    (4,2)
    (4.5,0.5)
    (4,-1)
    (3.3,-0.5)
  };
  \draw[thick]
  plot [smooth, tension=0.8] coordinates {
    (3.5,1.5)
    (4,1.8)
    (4.3,0.5)
    (4,-0.8)
    (3.5,-0.5)
  };
 \draw[thick]
  plot [smooth, tension=0.8] coordinates {
    (3.7,1.5)
    (4,1.6)
    (4.1,0.5)
    (4,-0.6)
    (3.7,-0.5)
  };
\end{tikzpicture}
\caption{The closure of the twin $t$} 
\end{figure}
\vspace*{-0.4cm}
\begin{definition}\cite{Nai}
A doodle is a collection of piecewise-linear closed curves $\{C_1,\dots,  C_n\}$ on a closed oriented surface such that there is no common point of any three curves and no triple intersection of the same curve $C_i$.
\end{definition}

An oriented doodle is a doodle whose curves carry orientations. Two doodles on a closed surface are equivalent if related by a homotopy without triple intersections, which is generated by moves $M_1$ and $M_2$ \cite{Nai}.

\vspace*{0.1cm}

\begin{center}
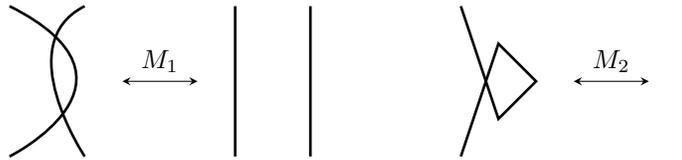
\begin{figure}[H]
\begin{tikzpicture}[scale=1,
  endpoint/.style={circle,fill=black,inner sep=1.3pt}
]
\draw[line width=1pt] (0,2) .. controls +(2,-1)and +(0,0) .. (0,0);
\draw[line width=1pt] (1,2) .. controls +(-1,-.5)and +(0,0) .. (1,0);
\draw[stealth-stealth] (1.5,1)--(2.5,1);
\node[above] at (2,1){$M_1$};
\draw[line width=1pt] (3,0)--(3,2);
\draw[line width=1pt] (4,0)--(4,2);
\draw[line width=1pt] (6,2)--(6.5,.5)--(7,1)--(6.5,1.5)--(6,0);
\draw[stealth-stealth] (7.5,1)--(8.5,1);
\draw[line width=1pt] (9,2)--(9,0);
\node[above] at (8,1){$M_2$};
\end{tikzpicture}
\caption{The $M_1$ and $M_2$ moves}
\end{figure}
\end{center}

\vspace*{-0.5cm}

It is clear that the closure of a twin is a doodle. The following theorem shows the converse.
\begin{theorem} \cite{Nai}
Every oriented doodle is a closure of a twin.
\end{theorem}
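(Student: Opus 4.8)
The plan is to mimic Alexander's classical braiding theorem, which realizes every oriented link as the closure of a braid, and to transport it across the doodle/twin dictionary. Since the closure of a twin naturally lives in the plane (equivalently, in the $2$-sphere), I would first present the given oriented doodle as a finite collection of oriented immersed closed curves in the plane whose only singularities are transverse double points, with no triple points, as required by the definition. The entire goal is then to homotope this collection, using only the permitted moves $M_1$ and $M_2$ together with planar isotopy, into a position from which a single cut produces a twin.

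Fix a point $O$ in a complementary region of the doodle, to play the role of the braid axis, and introduce polar coordinates centered at $O$. The key reformulation is that a doodle is the closure of a twin exactly when it can be put in a position where the angular coordinate about $O$ is strictly monotonic along every strand, that is, where traversing each oriented curve makes the angle about $O$ always increase. Indeed, granting this monotonicity, cutting the plane along a single ray emanating from $O$ unrolls the curves into $n$ strands running across the strip $\mathbb{R}\times[0,1]$, each descending monotonically and meeting only in transverse double points; this is precisely a twin $t$, and rejoining the endpoints recovers the original doodle as its closure.

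The substance of the argument is therefore to eliminate the arcs along which the angular coordinate decreases, the \emph{wrong-way} arcs. For each such arc I would apply the doodle analogue of Alexander's threading move: the bad arc is pushed across the axis point $O$ and replaced by an arc traveling the long way around $O$ in the correct, increasing-angle direction. Each such move strictly decreases the number of wrong-way arcs (while, if needed, increasing the winding of a curve about $O$), so after finitely many steps every strand winds monotonically about $O$, and the previous paragraph applies to finish.

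The main obstacle I anticipate is verifying that each threading move is legitimate in the doodle category rather than the link category. Unlike braids, where the over/under data makes the Alexander trick essentially free, here one must guarantee that sliding an arc across $O$ never forces three curves through a common point, and that the resulting change of diagram is generated by the admissible moves $M_1$ and $M_2$ together with ambient isotopy. I would control this by routing the threading arc inside a thin collar meeting the rest of the doodle only in a prescribed set of transverse double points, perturbing generically so that no triple point is ever created, and then decomposing the diagram change into a sequence of $M_1$ and $M_2$ moves. Particular care is required for arcs passing close to $O$ and for curves that initially do not wind around $O$ at all; in these cases one checks that repeated threading still terminates with a globally monotonic diagram. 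Once these local verifications are established, the finiteness of the reduction together with the cutting construction completes the proof.
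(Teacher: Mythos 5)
The paper offers no proof of this statement at all: it is quoted verbatim from \cite{Nai}, where it is an Alexander-type theorem for doodles on the two-sphere. So there is no ``paper proof'' to match, and your attempt has to stand on its own merits. It does not, for the following reason.

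Your outline (fix an axis point $O$, characterize closures of twins by monotonicity of the angular coordinate, remove wrong-way arcs by an Alexander-style threading move) is the natural adaptation of Alexander's braiding argument, and you correctly identify the dangerous step. But your proposed resolution --- ``perturbing generically so that no triple point is ever created'' and then decomposing the change into $M_1$ and $M_2$ moves --- cannot work, and this is the heart of the matter, not a technicality. In the doodle category the only allowed moves are the flat analogues of Reidemeister I and II (the paper's $M_1$, $M_2$); the instant at which a moving arc sweeps across an existing double point of two other strands is precisely a forbidden triple point, i.e.\ a flat Reidemeister III event. Whether such an instant occurs is not a general-position issue: for a homotopy rel endpoints from the old wrong-way arc $\alpha$ to the threaded arc $\beta$, the signed number of times the moving arc passes across a fixed crossing point $p$ of the rest of the diagram equals the winding number about $p$ of the closed loop $\alpha\cup\overline{\beta}$, which is an invariant of the homotopy class, hence immune to perturbation, thin collars, or rerouting of the intermediate stages. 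Since the threading loop by construction goes ``the long way around'' $O$, it will in general enclose crossings of the diagram, so \emph{every} homotopy realizing the threading creates triple points. In the link setting this obstruction is invisible because the threaded arc passes over all strands using the third dimension; for doodles there is no over/under, and the before and after diagrams of a naive threading move need not even be equivalent doodles. A correct argument must either choose $\beta$ so that a complementary region of $\alpha\cup\overline{\beta}$ containing no crossings gets swept --- which is exactly where working on the closed surface $S^2$ rather than in $\mathbb{R}^2$ (as you do) becomes essential, since on the sphere one may sweep across either side --- or proceed by a different reduction altogether. Supplying that argument is the actual content of the cited theorem, and it is missing from your proof.
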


\vspace*{0.3cm}

In what follows, we give the concept of $k$-local representations of any group $G$ with finite number of generators. 
\vspace*{0.1cm}

\begin{definition}\cite{Nas20241}
Let $G$ be a group with generators $g_1,g_2,\ldots,g_{n-1}$. A representation $\theta: G \rightarrow \text{GL}_{m}(\mathbb{C})$ is said to be $k$-local if it is of the form
$$\theta(g_i) =\left( \begin{array}{c|@{}c|c@{}}
   \begin{matrix}
     I_{i-1} 
   \end{matrix} 
      & 0 & 0 \\
      \hline
    0 &\hspace{0.2cm} \begin{matrix}
   		M_i
   		\end{matrix}  & 0  \\
\hline
0 & 0 & I_{n-i-1}
\end{array} \right) \hspace*{0.2cm} \text{for} \hspace*{0.2cm} 1\leq i\leq n-1,$$ 
where $M_i \in \text{GL}_k(\mathbb{C})$ with $k=m-n+2$ and $I_r$ is the $r\times r$ identity matrix. The $k$-local representation is said to be homogeneous if all the matrices $M_i$ are equal.
\end{definition}

\vspace*{0.1cm}

Y. Mikhalchishina classified all $2$-local linear representations of $B_3$ and further developed all homogeneous $2$-local representations of $B_n$, for $n \geqslant 3$ \cite{Mik2013}. Moreover, T. Mayassi and M. Nasser generalized Mikhalchishina's work by classifying all homogeneous $3$-local representations of $B_n$ for $n\geqslant 4$ \cite{Mayassi2025}. Additionally, M. Nasser, M. Chreif, and M. Dally determined all homogeneous $k$-local representations of the flat virtual braid group $FVB_n$ for different degrees $k$ \cite{MNas2025}. Also, M. Nasser, V. Keshari, and M. Prabhakar classified all homogeneous $2$-local representations of the twisted virtual braid group $TVB_n, n\geqslant 2$, studying their faithfulness and irreducibility in some cases \cite{Mnas2025}. Building on this, V. Keshari, M. Nasser, and M. Prabhakar investigated all homogeneous $2$-local representations of the multi-virtual braid group $M_kVB_n$ and the multi-welded braid group $M_kWB_n$ \cite{nassernew}. All these previous works were to answer the following question.

\vspace*{0.1cm}

\begin{question}
If $\rho$ is a homogeneous $k$-local representation of a group $G$, then what are the possible forms and properties of $\rho$?
\end{question}

\vspace*{0.1cm}

In this paper, we answer this question for the twin group $T_n$ for all $n \geqslant 3$ in the case $k=2$.

\vspace*{0.2cm}

\section{Classification of all homogeneous $2$-local representations of the twin groups $T_n$}

\vspace{.1cm}

The following theorem provides a complete classification of all homogeneous $2$-local representations of the twin group $T_n$ for every $n\geqslant 2$.

\vspace{.1cm}

\begin{theorem} \label{TheoTn}
For $n\geqslant 2$, let $\xi: T_n \rightarrow \text{GL}_n(\mathbb{C})$ be a non-trivial homogeneous $2$-local representation of $T_n$. Then, $\xi$ is equivalent to one of the following three representations.

\vspace{.2cm}

\begin{itemize}
\item[(1)] $\xi_1: T_n \rightarrow \text{GL}_n(\mathbb{C})$ such that
$$\xi_1(s_1) =\left(\begin{array}{c|@{}|c@{}}
   \begin{matrix}
     a & b\\
   \frac{1-a^2}{b} & -a 
   \end{matrix} 
     & 0 \\
      \hline
0 & I_{n-2}
\end{array}
\right),\hspace{0.5cm}
\xi_1(s_{n-1}) =\left( \begin{array}{c|@{}|c@{}}
   I_{n-2} & 0 \\
      \hline
0 & \begin{matrix}
     a & b\\
   \frac{1-a^2}{b} & -a 
   \end{matrix} 
\end{array}
\right),$$
and for $1<k<n-1$,
$$\xi_1(s_k) =\left( \begin{array}{c|@{}c|c@{}}
   \begin{matrix}
     I_{k-1} 
   \end{matrix} 
      & 0 & 0 \\
      \hline
    0 &\hspace{0.2cm}  \begin{matrix}
   		a & b\\
   		\frac{1-a^2}{b} & -a
   		\end{matrix}  & 0  \\
\hline
0 & 0 & I_{n-k-1}
\end{array} \right),$$
where $a,b\in \mathbb{C}$, $b\neq 0$, and $I_i$ is the identity matrix of order $i$.\\

\item[(2)] $\xi_2: T_n \rightarrow \text{GL}_n(\mathbb{C})$ such that
$$\xi_2(s_1) =\left(\begin{array}{c|@{}|c@{}}
   \begin{matrix}
     \pm1 & 0\\
   c & \mp1 
   \end{matrix} 
     & 0 \\
      \hline
0 & I_{n-2}
\end{array}
\right),\hspace{0.5cm}
\xi_2(s_{n-1}) =\left( \begin{array}{c|@{}|c@{}}
   I_{n-2} & 0 \\
      \hline
0 & \begin{matrix}
     \pm1 & 0\\
   c & \mp1 
   \end{matrix}
\end{array}
\right),$$
and for $1<k<n-1$,
$$\xi_2(s_k) =\left( \begin{array}{c|@{}c|c@{}}
   \begin{matrix}
     I_{k-1} 
   \end{matrix} 
      & 0 & 0 \\
      \hline
    0 &\hspace{0.2cm}  \begin{matrix}
   		\pm1 & 0\\
   		c & \mp1
   		\end{matrix}  & 0  \\
\hline
0 & 0 & I_{n-k-1}
\end{array} \right),$$ where $c\in \mathbb{C}$.
\\
\item[(3)] $\xi_3: T_n \rightarrow \text{GL}_n(\mathbb{C})$ such that
$$\xi_3(s_1) =\left(\begin{array}{c|@{}|c@{}}
   \begin{matrix}
     -1 & 0\\
   0 & -1 
   \end{matrix} 
     & 0 \\
      \hline
0 & I_{n-2}
\end{array}
\right),\hspace{0.5cm}
\xi_3(s_{n-1}) =\left( \begin{array}{c|@{}|c@{}}
   I_{n-2} & 0 \\
      \hline
0 & \begin{matrix}
     -1 & 0\\
   0 & -1 
   \end{matrix} 
\end{array}
\right),$$
and for $1<k<n-1$,
$$\xi_3(s_k) =\left( \begin{array}{c|@{}c|c@{}}
   \begin{matrix}
     I_{k-1} 
   \end{matrix} 
      & 0 & 0 \\
      \hline
    0 &\hspace{0.2cm}  \begin{matrix}
   		-1 & 0\\
   		0 & -1
   		\end{matrix}  & 0  \\
\hline
0 & 0 & I_{n-k-1}
\end{array} \right).$$
\end{itemize}
\end{theorem}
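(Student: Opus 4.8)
The plan is to exploit the fact that a homogeneous $2$-local representation is completely encoded by its single common $2\times 2$ block. Writing $M=\begin{pmatrix} a & b \\ c & d\end{pmatrix}\in \mathrm{GL}_2(\mathbb{C})$ for the matrix appearing in the central block of every $\xi(s_i)$, the entire representation is determined by $M$, so the task reduces to deciding precisely which $M$ make the assignment $s_i\mapsto \xi(s_i)$ respect the two families of defining relations of $T_n$.

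First I would dispose of the far-commutativity relations $s_is_j=s_js_i$ for $|i-j|>1$. For such $i,j$ the active coordinate blocks $\{i,i+1\}$ and $\{j,j+1\}$ are disjoint, so $\xi(s_i)$ and $\xi(s_j)$ operate on complementary sets of coordinates and commute automatically, \emph{regardless of $M$}. This observation is the conceptual heart of the argument and the feature that makes $T_n$ far more tractable than $B_n$: since $T_n$ has no braid relation, the adjacent generators $s_i,s_{i+1}$ are subject to no relation at all beyond being involutions, and hence impose no constraint linking their overlapping blocks. Consequently the \emph{only} genuine condition on $M$ comes from the involutivity relations $s_i^2=1$, which, upon reading off the central block of $\xi(s_i)^2 = I_{i-1}\oplus M^2\oplus I_{n-i-1}$, is equivalent to the single matrix equation $M^2=I_2$.

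The remaining step is to solve $M^2=I_2$ and organize the solution set. I would invoke the Cayley--Hamilton identity $M^2=(\operatorname{tr}M)\,M-(\det M)\,I_2$, which turns $M^2=I_2$ into $(\operatorname{tr}M)\,M=(1+\det M)\,I_2$, and then split into two cases. If $\operatorname{tr}M\neq 0$, then $M$ is a scalar matrix $\lambda I_2$ with $\lambda^2=1$; non-triviality excludes $\lambda=1$, leaving $M=-I_2$, which is family $\xi_3$. If $\operatorname{tr}M=0$, then the left-hand side vanishes, forcing $\det M=-1$, so that $d=-a$ and $bc=1-a^2$; a final subdivision on whether $b\neq 0$ (writing $c=(1-a^2)/b$, giving family $\xi_1$) or $b=0$ (forcing $a=\pm 1$, $d=\mp 1$ with $c$ free, giving family $\xi_2$) exhausts all possibilities. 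Since every such $M$ satisfies $M^2=I_2$ and the far-commutativity relations hold automatically, each of the three forms genuinely defines a representation, so the list is both complete and valid; indeed the normalization is exact, which certainly yields the equivalence asserted in the statement.

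I expect the only real care to lie in the bookkeeping of the case analysis --- in particular keeping the degenerate boundary $b=0$ versus $b\neq 0$ straight, and verifying that the three families are mutually exhaustive with each non-trivial --- rather than in any deep computation. The substantive insight is simply that the absence of a braid relation collapses all the defining constraints to the single involutive condition $M^2=I_2$, after which solving a $2\times 2$ matrix equation finishes the classification.
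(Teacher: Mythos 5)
Your proposal is correct and follows essentially the same route as the paper: observe that far-commutativity holds automatically because the active blocks are disjoint, reduce the involutivity relations to the single matrix equation $M^2=I_2$, and solve that equation to obtain the three families. The only cosmetic difference is that you organize the solution of $M^2=I_2$ via Cayley--Hamilton and a trace dichotomy, whereas the paper writes out the four entry-wise equations $a^2+bc=1$, $(a+d)b=0$, $(a+d)c=0$, $d^2+bc=1$ and leaves the (equivalent) case analysis as an elementary computation.
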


\begin{proof}
Because $\xi$ is a $2$-local representation of $T_n$, it follows that there is a $2\times2$ invertible complex matrix $M=\left( \begin{array}{@{}c@{}}
  \begin{matrix}
   		a & b\\
   		c & d
   		\end{matrix}
\end{array} \right)$ such that
$$\xi_1(s_1) =\left(\begin{array}{c|@{}|c@{}}
M & 0 \\
\hline
0 & I_{n-2}
\end{array}
\right),\hspace{0.5cm}
\xi_1(s_{n-1}) =\left( \begin{array}{c|@{}|c@{}}
I_{n-2} & 0 \\
\hline
0 & M 
\end{array}
\right),$$
and for $1<k<n-1$,
$$\xi_1(s_k) =\left( \begin{array}{c|c|c@{}}
   \begin{matrix}
     I_{k-1} 
   \end{matrix} 
      & 0 & 0 \\
      \hline
    0 & M & 0  \\
\hline
0 & 0 & I_{n-k-1}
\end{array} \right).$$
Note that for $i-j>1$, it is clear that
$$\xi(s_i)\xi(s_j)=\xi(s_j)\xi(s_i)=I_{j-1}\oplus M\oplus I_{i-j-2}\oplus M \oplus I_{n-i-1}.$$
So, the following relations hold for all $a,b,c,d\in \mathbb{C}.$
$$\xi(s_i)\xi(s_j)=\xi(s_j)\xi(s_i) \text{ whenever }|i-j|>1.$$
Since for $1\leqslant k\leqslant n-1$, $s_k^2=1$, the identity of $T_n$, it follows that $\xi(s_k)^2=I_n$. 
$$\xi(s_1)^2=\left(\begin{array}{c|@{}|c@{}}
   \begin{matrix}
     a^2+bc & ab+bd\\
   ac+cd & d^2+bc 
   \end{matrix} 
     & 0 \\
      \hline
0 & I_{n-2}
\end{array}
\right), \hspace{0.5cm}
\xi(s_{n-1})^2=\left(\begin{array}{c|@{}|c@{}}
    I_{n-2}
     & 0 \\
      \hline
0 & \begin{matrix}
     a^2+bc & ab+bd\\
   ac+cd & d^2+bc 
   \end{matrix}
\end{array}
\right),
$$
and 
$$\xi(s_k)^2=\left( \begin{array}{c|@{}c|c@{}}
   \begin{matrix}
     I_{k-1} 
   \end{matrix} 
      & 0 & 0 \\
      \hline
    0 &\hspace{0.2cm} \begin{matrix}
     a^2+bc & ab+bd\\
   ac+cd & d^2+bc 
   \end{matrix} & 0  \\
\hline
0 & 0 & I_{n-k-1}
\end{array} \right)\;\;\; \text{ for all } 1<k<n-1.$$
Thus the relations: $\xi(s_k)^2=I_n$ for all $1\leqslant k\leqslant n-1$ yield the following equations:
\begin{equation}
a^2+bc=1
\end{equation}
\begin{equation}
(a+d)b=0
\end{equation}
\begin{equation}
(a+d)c=0
\end{equation}
\begin{equation}
d^2+bc=1
\end{equation}
Elementary computations of this system of equations imply the required result.
\end{proof}

\vspace{0.3cm}

\section{Analysis of the homogeneous $2$-local representations of $T_n$}

In this section we study the structural properties of the homogeneous $2$-local representations of the twin group $T_n$. We first investigate their reducibility and establish that all such representations admit a non-trivial invariant subspace. In the second part, we explicitly perform the reduction process by quotienting out these invariant subspaces, thereby obtaining reduced representations of dimension $n-1$ and describing their matrix forms.

\vspace{.1cm}

\subsection{Reducibility of the homogeneous $2$-local representations of $T_n$}

\vspace{0.1cm}

Here, we show that all homogeneous $2$-local representations of $T_n$ are reducible. We omit the discussion of the reducibility of the representations $\xi_2$ and $\xi_3$ given in Theorem \ref{TheoTn}, as these cases are straightforward. For the rest of the paper, the vectors in $\mathbb{C}^n$ are considered as column vectors.

\begin{proposition}\label{xi1}
Let $\xi_1:T_n\to \text{GL}_n(\mathbb{C})$ be the representation of the twin group $T_n$ given in Theorem \ref{TheoTn}. Then $\xi_1$ is reducible for all $a,b\in\mathbb{C}$ with $b\neq0$.
\end{proposition}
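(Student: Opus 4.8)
The plan is to exhibit an explicit one-dimensional invariant subspace for $\xi_1$, that is, a nonzero vector $v \in \mathbb{C}^n$ such that $\xi_1(s_k)v$ is a scalar multiple of $v$ for every generator $s_k$, $1 \leqslant k \leqslant n-1$. Since $\xi_1$ acts $2$-locally, the matrix $\xi_1(s_k)$ differs from the identity only in the $2\times 2$ block $M = \bigl(\begin{smallmatrix} a & b \\ \frac{1-a^2}{b} & -a \end{smallmatrix}\bigr)$ occupying rows and columns $k, k+1$. So the condition that $v = (v_1, \dots, v_n)^{T}$ be an eigenvector of $\xi_1(s_k)$ reduces to a condition on the pair of coordinates $(v_k, v_{k+1})$ together with the requirement that all the other coordinates be fixed up to the same common scalar.

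First I would observe that the natural candidate is a common eigenvector of $M$ lifted consistently across all overlapping blocks. A clean choice is to look for a vector with all coordinates equal, $v = (1,1,\dots,1)^{T}$, and check when $M\bigl(\begin{smallmatrix}1\\1\end{smallmatrix}\bigr)$ is a scalar multiple of $\bigl(\begin{smallmatrix}1\\1\end{smallmatrix}\bigr)$; this forces $a + b = \frac{1-a^2}{b} - a$, i.e. a constraint on $a,b$, so the constant vector is invariant only for special parameter values and will not serve for \emph{all} $a,b$. The robust approach instead is to require that the eigenvalue on each block be the \emph{same} eigenvalue $\lambda$ of $M$, and to solve the coupled recurrence that the overlapping blocks impose. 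Concretely, for $\xi_1(s_k)$ with $1 < k < n-1$ to act as a scalar $\lambda$ on $v$, one needs the two middle entries to satisfy $M\bigl(\begin{smallmatrix}v_k\\v_{k+1}\end{smallmatrix}\bigr) = \lambda \bigl(\begin{smallmatrix}v_k\\v_{k+1}\end{smallmatrix}\bigr)$ while simultaneously $\lambda v_j = v_j$ for all $j \notin \{k,k+1\}$. For this to hold across all $k$ one is forced to take $\lambda = 1$, i.e. $v$ must lie in the $1$-eigenspace of $M$ in the relevant pair of coordinates and be annihilated elsewhere in a compatible way.

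Carrying this out, the matrix $M$ has characteristic polynomial $t^2 - (\operatorname{tr} M) t + \det M = t^2 - 0\cdot t + (-a^2 - b\cdot\frac{1-a^2}{b}) = t^2 - 1$, using the relations $a^2 + bc = 1$ and $d = -a$ established in Theorem \ref{TheoTn}; hence $M$ always has eigenvalues $+1$ and $-1$ regardless of $a,b$. Let $w = (w_1, w_2)^{T}$ be an eigenvector of $M$ for the eigenvalue $-1$, so $M w = -w$ with $w_1, w_2$ not both zero. I would then build the global vector by placing $w$ in a single consecutive pair of slots and zeros elsewhere — for instance $v = (w_1, w_2, 0, \dots, 0)^{T}$ tied to the block of $s_1$ — but because the blocks overlap, a single such $w$ is moved by the neighboring generator $s_2$, so the genuinely invariant choice is the \emph{telescoping} vector obtained by propagating the eigenvector relation through all overlapping blocks. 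This yields a one-dimensional subspace $\langle v\rangle$ fixed (up to sign) by every $\xi_1(s_k)$, giving reducibility.

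The main obstacle, and the step requiring genuine care, is the overlap between consecutive $2\times 2$ blocks: a vector adapted to the block of $s_k$ shares a coordinate with the block of $s_{k+1}$, so one cannot simply take an isolated local eigenvector. The resolution is to set up the compatibility relations coordinate-by-coordinate as a linear recurrence in the $v_j$ determined by the single eigenvalue $-1$ of $M$, solve that recurrence explicitly (the solution will be governed by the ratio $\frac{1-a}{b}$ or equivalently by the eigenvector $w$ of $M$), and verify directly that the resulting vector is simultaneously invariant under the endpoint generators $s_1, s_{n-1}$ and all interior generators. Once the explicit form of $v$ is written down, the invariance under each $\xi_1(s_k)$ is a routine block computation, valid for every $a,b$ with $b \neq 0$, which completes the proof.
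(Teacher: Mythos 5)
Your overall strategy is the same as the paper's: build a single vector $v$ whose every consecutive pair of coordinates $(v_k,v_{k+1})$ is an eigenvector of the block $M$, so that each $\xi_1(s_k)$ fixes the line $\langle v\rangle$, which gives reducibility. You even identify the correct mechanism (a geometric ``telescoping'' vector governed by the ratio $\frac{1-a}{b}$), and you correctly deduce at the outset that, because the coordinates outside the $k$-th block are untouched by $\xi_1(s_k)$, the common eigenvalue must be $\lambda=1$.

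But you then contradict that deduction: you take $w$ to be an eigenvector of $M$ for the eigenvalue $-1$ and propose to propagate it through the overlapping blocks, claiming the resulting line is fixed ``up to sign''. This step fails once $n\geqslant 3$, so that neighboring generators exist. Indeed, if $\xi_1(s_k)v=-v$ for some $k$, then every coordinate $v_j$ with $j\notin\{k,k+1\}$ satisfies $v_j=-v_j$, hence vanishes, so $v$ is supported in positions $k,k+1$; applying a neighboring generator $s_{k\pm1}$ to such a vector then forces (for generic parameters) either $v=0$ or $a\in\{1,-1\}$. So no sign freedom is available: exactly the eigenvalue $+1$ must be used, consistent with your own first observation. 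Note also a labelling error: the ratio $\frac{1-a}{b}$ you quote belongs to the $+1$-eigenvector of $M$, namely $\bigl(1,\tfrac{1-a}{b}\bigr)^T$, whereas the $-1$-eigenvector has ratio $-\frac{1+a}{b}$. With the sign corrected, the propagated vector is
\[
v=\sum_{j=1}^{n}\left(\frac{1-a}{b}\right)^{j-1}e_j,
\]
for which $M(v_k,v_{k+1})^T=(v_k,v_{k+1})^T$ for every $k$, hence $\xi_1(s_k)v=v$ for all $k$ and all $a,b$ with $b\neq0$; this is precisely the vector used in the paper. Finally, as written your argument leaves the explicit solution of the recurrence and the verification of invariance as placeholders, so even apart from the eigenvalue mistake the proof is not complete; with $\lambda=+1$ both steps are one-line block computations.
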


\begin{proof}
Consider the subspace $S$ of $\mathbb{C}^n$ spanned by the vector $$v=e_1+\frac{1-a}{b}e_2+
\frac{(1-a)^2}{b^2}e_3+\dots+
\frac{(1-a)^{n-1}}{b^{n-1}}e_n,$$
where $\{e_1,e_2,\dots, e_n\}$ is the canonical basis for $\mathbb{C}^n$.
Note that for $1\leqslant k\leqslant n-1$ we have 
$$
\left(
\begin{array}{cc}
 a & b \\
 \frac{1-a^2}{b} & -a \\
\end{array}
\right)
\left(
\begin{array}{c}
 \frac{(1-a)^{k-1}}{b^{k-1}} \\
 \frac{(1-a)^{k}}{b^{k}} \\
\end{array}
\right)
=\left(
\begin{array}{c}
 \frac{(1-a)^{k-1}}{b^{k-1}} \\
 \frac{(1-a)^{k}}{b^{k}} \\
\end{array}
\right).
$$ 
Therefore, 
$$\xi_1(s_k)v=(I_{k-1}\oplus\left(
\begin{array}{cc}
 a & b \\
 \frac{1-a^2}{b} & -a \\
\end{array}
\right)\oplus I_{n-k-1})v=v \text{ for all } 1\leqslant k\leqslant n-1.$$
This implies that $S$ is invariant under $\xi_1(s_k)$ for all $1\leqslant k \leqslant n-1$ and for all $a,b\in\mathbb{C}$ such that $b\neq0$. Hence, $\xi_1$ is reducible.
\end{proof}

\subsection{Reduction of the homogeneous 2-local representations of $T_n$}

\vspace{0.1cm}
We reduce the representation $\xi_1$ given in Theorem \ref{TheoTn}. We have shown that the vector
 $$v=e_1+\frac{1-a}{b}e_2+\frac{(1-a)^2}{b^2}e_3+\dots+\frac{(1-a)^{n-1}}{b^{n-1}}e_n$$
spans a one-dimensional subspace of $\mathbb{C}^n$ that is invariant under the action of the twin group $T_n$ in the representation $\xi_1$ for all $a,b\in\mathbb{C}$ with $b\neq 0$. By quotienting out this invariant subspace, we obtain an $(n-1)$-dimensional representation of the twin group $T_n$. To write the matrices of the reduced representation explicitly, we change from the canonical basis $\{e_1,\dots, e_n\}$ of $\mathbb{C}^n$ to the basis $\{v,e_2,\dots,e_n\}$. Let $Q$ denote the corresponding change-of-basis matrix:
$$Q=(v,e_2,e_3,\dots,e_n)=\left(
\begin{array}{ccccc}
 1 & 0 & 0 & \cdots & 0  \\
 \frac{1-a}{b} & 1 & 0 & \cdots & 0  \\
 \frac{(1-a)^2}{b^2} & 0 & 1 & \cdots & 0  \\
 \vdots & \vdots & \vdots &  & \vdots  \\
 \frac{(1-a)^{n-1}}{b^{n-1}} & 0 & 0 & \cdots & 1 \\
\end{array}
\right)=I_n+(v-e_1)e_1^T,$$
where $T$ stands for transpose of a matrix.
Then the inverse of $Q$ is given as
$$Q^{-1}=I_n-(v-e_1)e_1^T=\left(
\begin{array}{ccccc}
 1 & 0 & 0 & \cdots & 0  \\
- \frac{1-a}{b} & 1 & 0 & \cdots & 0  \\
 -\frac{(1-a)^2}{b^2} & 0 & 1 & \cdots & 0  \\
 \vdots & \vdots & \vdots &  & \vdots  \\
 -\frac{(1-a)^{n-1}}{b^{n-1}} & 0 & 0 & \cdots & 1 \\
\end{array}
\right).$$
This follows from the Sherman–Morrison formula for rank-one updates of the identity $(I+uv^T)^{-1}=I-uv^T$ if $v^Tu=0$ (see \cite{Golub}).

\vspace*{0.1cm}

Now, the images of the generators $s_k, 1\leqslant k\leqslant n-1$, of $T_n$ under $\xi_1$ relative to the basis $\{v,e_2,e_3,\cdots,e_n\}$ of $\mathbb{C}^n$ are given by:
$$ s_1\mapsto Q^{-1}\xi_1(s_1)Q=
\left(
\begin{array}{ccccccc}
 1 & b & 0 & \cdots& 0 & \cdots & 0  \\
 0 & -1 & 0 & \cdots & 0 & \cdots & 0\\
 0 & \frac{(a-1)^2}{-b} & 1 & \cdots & 0 & \cdots & 0  \\
 \vdots & \vdots & \vdots & \cdots & \vdots &  & \vdots  \\
  0 & \frac{(a-1)^{j}}{(-b)^{j-1}} & 0 & \cdots & 1 & \cdots & 0 \\
  \vdots & \vdots & \vdots & \cdots & \vdots &  & \vdots  \\
  0 & \frac{(a-1)^{n-1}}{(-b)^{n-2}} & 0 & \cdots& 0 & \cdots & 1 \\
\end{array}
\right) 
$$
and
$$ s_k\mapsto Q^{-1}\xi_1(s_k)Q=\xi_1(s_k)\text{ for all } 2\leqslant k\leqslant n-1.
$$

\vspace{0.1cm}

The restriction $\tilde{\xi}_1$ of $\xi_1$ to the quotient space $\mathbb{C}^n/\langle v\rangle\equiv \mathbb{C}^{n-1}$ is obtained by deleting the first row and the first column of each of the matrices $Q^{-1}\xi_1(s_k)Q$, $1\leqslant k\leqslant n-1$, as follows.
$$ s_1\mapsto \left(
\begin{array}{cccccc}
 -1 & 0 & \cdots & 0 & \cdots & 0\\
 \frac{(a-1)^2}{-b} & 1 & \cdots & 0 & \cdots & 0  \\
  \vdots & \vdots & \cdots & \vdots &  & \vdots  \\
  \frac{(a-1)^{j}}{(-b)^{j-1}} & 0 & \cdots & 1 & \cdots & 0 \\
  \vdots & \vdots & \cdots & \vdots &  & \vdots  \\
  \frac{(a-1)^{n-1}}{(-b)^{n-2}} & 0 & \cdots& 0 & \cdots & 1 \\
\end{array}
\right), \;\;\;
s_2\mapsto \left(
\begin{array}{ccccc}
  a & b & 0 & \cdots & 0\\
  \frac{1-a^2}{b} & -a & 0 & \cdots & 0  \\
  0 &  0 & 1 & \cdots & 0\\
 \vdots & \vdots & \vdots & \ddots  & \vdots  \\
  0 &  0 & 0  & \cdots & 1\\
\end{array}
\right),
$$
and
$$
s_k\mapsto \left(
\begin{array}{ccccccccc}
  1 & 0 & 0 & 0 &\cdots & \cdots & \cdots & \cdots & 0\\
  0 & 1 & 0 & 0 &\cdots & \cdots & \cdots &\cdots & 0\\
  0 & 0 & 1 & 0 &\cdots & \cdots & \cdots &\cdots & 0\\
  \vdots & \vdots & \vdots &\ddots & \vdots &\vdots & \vdots &\vdots & \vdots\\
  0 & 0 & 0 & 0 & a & b & 0 & \cdots & 0\\
  0 & 0 & 0 & 0 & \frac{1-a^2}{b} & -a & 0 & \cdots & 0\\
    0 & 0 & 0 & 0 & 0 & 0 & 1 & \cdots & 0\\
  \vdots & \vdots & \vdots &\vdots & \vdots & \vdots & \vdots &\ddots & \vdots\\ 
  0& 0& 0 & 0 & 0 & 0 & 0 & \cdots & 1
  
\end{array}
\right)=
\left( \begin{array}{c|@{}c|c@{}}
   \begin{matrix}
     I_{k-2} 
   \end{matrix} 
      & 0 & 0 \\
      \hline
    0 &\hspace{0.2cm}  \begin{matrix}
   		a & b\\
   		\frac{1-a^2}{b} & -a
   		\end{matrix}  & 0  \\
\hline
0 & 0 & I_{n-k-1}
\end{array} \right).
$$

\vspace*{0.1cm}

Now, we state the following definition.

\vspace*{0.1cm}

\begin{definition}
The representation $\xi_1$ of $T_n$ given in Theorem \ref{TheoTn} is reduced to the representation $\tilde{\xi}_1$ of dimension $n-1$ that is given explicitly above.
\end{definition}

\vspace{0.2cm}

\section{Irreducibility of the representations $\tilde{\xi}_1$ of the twin group}

\vspace{0.2cm}

In this section, we establish the necessary and sufficient condition for the irreducibility of the representation $\tilde{\xi}_1$. To this end, we first present several propositions, lemmas, and theorems, which will lead us to the statement of our main result. We start by the following proposition.

\begin{proposition}\label{prop -1,1}
If $a\in\{1,-1\}$, then the representation $\tilde{\xi}_1$ is reducible.
\end{proposition}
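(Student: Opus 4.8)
The plan is to exhibit, uniformly for $a=1$ and $a=-1$, a single explicit vector $w$ that is a common eigenvector of all the matrices $\tilde{\xi}_1(s_k)$, so that the line $\langle w\rangle$ is a $T_n$-invariant subspace of the $(n-1)$-dimensional representation space; since $n-1\geqslant 2$, this line is proper and nonzero, which is precisely reducibility. Write $f_1,\dots,f_{n-1}$ for the standard basis of $\mathbb{C}^{n-1}$ on which $\tilde{\xi}_1$ acts (so $f_i$ is the image of $e_{i+1}$ in the quotient). The key simplification is that $a\in\{1,-1\}$ forces $1-a^2=0$, so each interior block $\left(\begin{smallmatrix} a & b\\ \frac{1-a^2}{b} & -a\end{smallmatrix}\right)$ degenerates to the upper-triangular matrix $\left(\begin{smallmatrix} a & b\\ 0 & -a\end{smallmatrix}\right)$, while the first-column entries of $\tilde{\xi}_1(s_1)$ satisfy $\frac{(a-1)^i}{(-b)^{i-1}}=-(1-a)\left(\frac{1-a}{b}\right)^{i-1}$.

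First I would pin down the candidate $w=\sum_{i=1}^{n-1}w_i f_i$ by imposing invariance under the generators $s_k$ with $2\leqslant k\leqslant n-1$. Each $\tilde{\xi}_1(s_k)$ acts as the $2\times 2$ block on the coordinates $(f_{k-1},f_k)$ and as the identity elsewhere, so $\tilde{\xi}_1(s_k)w=w$ collapses to the single requirement that $(w_{k-1},w_k)$ be a $(+1)$-eigenvector of the block, namely $w_k=\frac{1-a}{b}\,w_{k-1}$. Solving this recurrence starting from $w_1=1$ gives the unique (up to scale) candidate $w=\sum_{i=1}^{n-1}\left(\frac{1-a}{b}\right)^{i-1}f_i$, which for $a=1$ collapses to $w=f_1$.

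It then remains to check that this same $w$ is an eigenvector of $\tilde{\xi}_1(s_1)$. From the explicit first matrix, $\tilde{\xi}_1(s_1)w$ has $f_1$-coefficient $-w_1$ and, for $i\geqslant 2$, $f_i$-coefficient $w_1\frac{(a-1)^i}{(-b)^{i-1}}+w_i$; demanding that $w$ be a $(-1)$-eigenvector thus reduces to $-w_1=-w_1$ (automatic) together with $\frac{(a-1)^i}{(-b)^{i-1}}=-2w_i$ for $2\leqslant i\leqslant n-1$. Substituting the simplification of the first paragraph and $w_i=\left(\frac{1-a}{b}\right)^{i-1}$ turns the latter into $-(1-a)\left(\frac{1-a}{b}\right)^{i-1}=-2\left(\frac{1-a}{b}\right)^{i-1}$, which holds exactly because $1-a=2$ when $a=-1$ (and because both sides vanish when $a=1$). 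Hence $\tilde{\xi}_1(s_1)w=-w$, so $\langle w\rangle$ is invariant under every generator and $\tilde{\xi}_1$ is reducible.

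I expect this last verification to be the only genuine obstacle: the vector $w$ is forced by the relations coming from $s_2,\dots,s_{n-1}$, and there is no a priori reason for it to be $s_1$-invariant as well. The content of the proposition is exactly that the matching $\frac{(a-1)^i}{(-b)^{i-1}}=-2w_i$ succeeds precisely at the two values $a=\pm 1$, the same values excluded in the main irreducibility criterion; everything else is a routine eigenvector computation. I would therefore keep the write-up centered on this identity and simply record the resulting one-dimensional invariant subspaces, $\langle f_1\rangle$ when $a=1$ and $\bigl\langle\sum_{i=1}^{n-1}(2/b)^{i-1}f_i\bigr\rangle$ when $a=-1$.
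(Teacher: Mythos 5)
Your proof is correct and follows essentially the same route as the paper: both exhibit a one-dimensional invariant subspace spanned by a common eigenvector of all the $\tilde{\xi}_1(s_k)$, and your lines $\langle f_1\rangle$ (for $a=1$) and $\bigl\langle\sum_{i=1}^{n-1}(2/b)^{i-1}f_i\bigr\rangle$ (for $a=-1$) are exactly the paper's subspaces, the latter being the paper's vector $v=\sum_{k=1}^{n-1}(b/2)^{n-1-k}e_k$ rescaled by $(b/2)^{n-2}$. The only difference is presentational: you derive the candidate vector uniformly from the $(+1)$-eigenvector recurrence for $s_2,\dots,s_{n-1}$ and then verify the $s_1$ condition, while the paper simply states the two vectors case by case and checks them.
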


\begin{proof}
We consider two cases in the following.
\begin{enumerate}
\item If $a=1$ then the representation $\tilde{\xi}_1$ of $T_n$ has the subspace spanned by $e_{1}=(1,0,\dots,0)^T$ as an invariant subspace. Indeed, $\tilde{\xi_1}(s_1)e_1=-e_1$ and $\tilde{\xi_1}(s_k)e_1=e_1$ for all $2\leqslant k\leqslant n-1$.
\item If $a=-1$ then the representation $\tilde{\xi}_1$ of $T_n$ has the subspace spanned by $v=\left(\frac{b^{n-2}}{2^{n-2}},\dots, \frac{b}{2},1\right)^T=\sum_{k=1}^{n-1}(\frac{b}{2})^{n-1-k}e_k$ as an invariant subspace. In fact, $\tilde{\xi_1}(s_1)v=-v$ and $\tilde{\xi_1}(s_k)v=v$ for all $2\leqslant k\leqslant n-1$.
\end{enumerate}
\end{proof}

\subsection{Irreducibility of $\tilde{\xi}_1$ of the twin group $T_3$}

In this subsection, we deal with the case $n=3$. The next theorem states a necessary and sufficient condition for the representation $\tilde{\xi}_1$ of $T_3$ to be irreducible.

\begin{theorem}
The representation $\tilde{\xi}_1$ of the twin group $T_3$ is irreducible if and only if $a\not\in\{\pm1,\pm i\sqrt{3}\}$.
\end{theorem}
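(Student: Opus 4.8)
The plan is to exploit that $T_3=\langle s_1,s_2\mid s_1^2=s_2^2=1\rangle$ is generated by two involutions, so a line in $\mathbb{C}^2$ is invariant under all of $T_3$ precisely when it is invariant under both $A:=\tilde\xi_1(s_1)$ and $B:=\tilde\xi_1(s_2)$. Since $\tilde\xi_1$ is two-dimensional, irreducibility is therefore equivalent to the statement that $A$ and $B$ admit \emph{no common eigenvector}. Specializing the reduced matrices to $n=3$ gives
$$A=\begin{pmatrix} -1 & 0 \\ -\frac{(a-1)^2}{b} & 1 \end{pmatrix},\qquad B=\begin{pmatrix} a & b \\ \frac{1-a^2}{b} & -a \end{pmatrix},$$
both of which are involutions with the two distinct eigenvalues $\pm1$; in particular neither is scalar, so each has exactly two eigenlines, and the problem reduces to comparing these four lines.

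First I would list the eigenlines explicitly. The matrix $A$ is lower triangular, with eigenvector $(0,1)^T$ for the eigenvalue $1$ and $\left(1,\tfrac{(a-1)^2}{2b}\right)^T$ for the eigenvalue $-1$. For $B$, solving $(B\mp I)w=0$ yields $\left(1,\tfrac{1-a}{b}\right)^T$ for the eigenvalue $1$ and $\left(1,-\tfrac{a+1}{b}\right)^T$ for the eigenvalue $-1$. Because $b\neq0$, every eigenline of $B$ has nonzero first coordinate, so the line $\langle(0,1)^T\rangle$ can never be an eigenline of $B$; this immediately rules out the eigenvalue-$1$ eigenvector of $A$ as a candidate, and the only way a common eigenvector can arise is for $\left\langle\left(1,\tfrac{(a-1)^2}{2b}\right)^T\right\rangle$ to coincide with one of the two eigenlines of $B$.

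The remaining step is to detect that coincidence by comparing second coordinates (the first coordinates are all normalized to $1$). Matching with the eigenvalue-$1$ line of $B$ gives $\tfrac{(a-1)^2}{2}=1-a$, which simplifies to $(a-1)(a+1)=0$; matching with the eigenvalue-$(-1)$ line gives $\tfrac{(a-1)^2}{2}=-(a+1)$, which simplifies to $a^2+3=0$. Hence $A$ and $B$ share an eigenvector exactly when $a\in\{1,-1,i\sqrt3,-i\sqrt3\}$, i.e.\ $\tilde\xi_1$ is reducible precisely on this set, which is the claimed exceptional locus. Combining this with Proposition \ref{prop -1,1}, which already accounts for $a\in\{1,-1\}$, completes the equivalence. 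I do not expect a serious obstacle here: the whole argument is a finite eigenvector computation, and the only point demanding care is the bookkeeping that the exceptional set is the union of the two parallelism conditions together with the $b\neq0$ exclusion of the degenerate line $(0,1)^T$; I would also double-check the borderline value $a=1$, where the $-1$-eigenvector of $A$ degenerates, to confirm it is consistently captured by both the computation and Proposition \ref{prop -1,1}.
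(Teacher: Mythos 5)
Your proof is correct and takes essentially the same approach as the paper: both arguments reduce irreducibility to the nonexistence of a common eigenvector of the two involutions $\tilde\xi_1(s_1)$ and $\tilde\xi_1(s_2)$, enumerate the eigenlines, and find the exceptional values $a\in\{\pm1\}$ and $a^2+3=0$. The only difference is cosmetic — the paper conjugates by the eigenbasis of $\tilde\xi_1(s_1)$ and tests invariance of the coordinate lines under $S_2$, while you compare the four eigenlines directly (your normalization $\left(1,\tfrac{(a-1)^2}{2b}\right)^T$ also handles $a=1$ without the degeneracy present in the paper's $v_1$).
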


\begin{proof}
The representation $\tilde{\xi}_1$ of $T_3$ is given by:
$$\tilde{\xi}_1(s_1)=\left(
\begin{array}{cc}
  -1 & 0\\
  -\frac{(a-1)^2}{b} & 1 \\
\end{array}
\right)\;\;\;
\text{ and }\;\;\;
\tilde{\xi}_1(s_2)=\left(
\begin{array}{cc}
  a & b\\
 \frac{1-a^2}{b} & -a \\
\end{array}
\right).$$
The eigenvalues of $\tilde{\xi}_1(s_1)$ are $-1$ and $1$. The vectors
$v_1=\left(\frac{2b}{(a-1)^2}, 1\right)^T$ and $v_2=(0,1)^T$ are two linearly independent eigenvectors of $\tilde{\xi}_1(s_1)$ corresponding to the eigenvalues $-1$ and $1$ respectively.
Consider the matrix
$$P=(v_1,v_2)=\left(
\begin{array}{cc}
 \frac{2b}{(a-1)^2} & 0 \\
  1 & 1\\
\end{array}
\right).$$
Then, the matrices representing $\tilde{\xi}_1(s_1)$ and $\tilde{\xi}_1(s_2)$ relative to the basis $\{v_1,v_2\}$ of $\mathbb{C}^2$ are as follows.
$$\tilde{\xi}_1(s_1)\mapsto S_1=\left(
\begin{array}{cc}
 -1 & 0 \\
  0 & 1\\
\end{array}
\right)$$
and
$$
\tilde{\xi}_1(s_2)\mapsto S_2=P^{-1}\tilde{\xi}_1(s_2)P=
\left(
\begin{array}{cc}
 \frac{1}{2}(1+a^2) & \frac{1}{2}(a-1)^2 \\
  \frac{3+3a+a^2+a^3}{2-2a}&-\frac{1}{2}(1+a^2) \\
\end{array}
\right).$$
The eigenvectors of $\tilde{\xi}_1(s_1)$ relative to the basis $\{v_1,v_2\}$ are written as $v_1=(1,0)^T$ and $v_2=(0,1)^T$. So we have $S_2v_1=\left(
\begin{array}{c}
 \frac{1}{2} \left(a^2+1\right) \\
 \frac{a^3+a^2+3a+3}{2-2 a} \\
\end{array}
\right).$
Then, the subspace $\langle v_1\rangle$ is invariant under the action of $S_2$ if and only if $a=-1$ or $a=\pm i\sqrt{3}$.
In addition, we have also that $S_2v_2=\left(
\begin{array}{c}
 \frac{1}{2} (a-1)^2 \\
 -\frac{1}{2} \left(a^2+1\right) \\
\end{array}
\right)$. Thus, the subspace $\langle v_2\rangle$ is invariant under the action of $S_2$ if and only if $a=1$.

\vspace*{.1cm}

\noindent Therefore, $\tilde{\xi}_1$ is irreducible if and only if $a\not\in\{\pm1,\pm i\sqrt{3}\}$.
\end{proof}

\subsection{Irreducibility of $\tilde{\xi}_1$ of the twin group $T_n$ for $n\geqslant4$}
Now, we deal with the case $n\geqslant 4$. First of all, we note that $\det(\lambda I_{n-1}-\tilde{\xi}_1(s_1))=(\lambda+1)(\lambda-1)^{n-2}$. This implies that the eigenvalues of $\tilde{\xi}_1(s_1)$ are $-1$ (of multiplicity 1) and $1$ (of multiplicity $n-2$). Assume that $a\not\in\{1,-1\}$, then, direct computations yield that the following vectors are $n-1$ linearly independent eigenvectors of $\tilde{\xi}_1(s_1)$ corresponding to the eigenvalues $-1$, $\underbrace{1,\dots, 1}_{n-2}$ respectively.
$$w=\left(
\begin{array}{c}
 \frac{2 b^{n-2}}{(1-a)^{n-1}} \\
 \frac{b^{n-3}}{(1-a)^{n-3}} \\
 \vdots \\
 \frac{b^{n-j-1}}{(1-a)^{n-j-1}} \\
 \vdots\\
 \frac{b}{1-a} \\
 1 \\
\end{array}
\right),\; e_{2},\;\dots\;,\; e_{n-1}.$$ 
To discuss the irreducibility of the representation $\tilde{\xi}_1$, we write the matrices corresponding to generators of $T_n$ relative to the basis $B=\{w,e_2,\dots,e_{n-1}\}$ of $\mathbb{C}^{n-1}$.
Consider the transition matrix

$$P=(w,e_2,\cdots, e_j, \cdots , e_{n-1})=\left(
\begin{array}{cccccc}
 \frac{2 b^{n-2}}{(1-a)^{n-1}} & 0  &\cdots &0 &\cdots & 0\\
 \frac{b^{n-3}}{(1-a)^{n-3}} &1 &\cdots &0 &\cdots & 0\\
 \vdots & \vdots &\cdots & \vdots &\cdots & \vdots\\
 \frac{b^{n-j-1}}{(1-a)^{n-j-1}} & 0 &\cdots &1  &\cdots & 0\\
 \vdots & \vdots & \cdots & \vdots  &\cdots & \vdots\\
 \frac{b}{1-a} & 0 &\cdots &0  &\cdots & 0\\
 1 & 0 &\cdots &0  &\cdots & 1\\
\end{array}
\right)=
I_{n-1}+ve_1^T,$$
where $v=w-e_1$ and $I_{n-1}$ is the identity matrix of order $(n-1)$.
Now apply Sherman-Morrison formula \cite{Golub} to get the inverse of $P$.
\vspace{-.09cm}
$$P^{-1}=I_{n-1}-\frac{1}{1+e_1^Tv}ve_1^{T}=I_{n-1}-\frac{1}{w_1}ve_1^{T}=I_{n-1}-\frac{(1-a)^{n-1}}{2 b^{n-2}}ve_1^{T},$$
where $w_i$ stands for the $i$-th component of the vector $w$.
Hence, $P^{-1}$ is given by:
$$P^{-1}=\left(
\begin{array}{cccccc}
 \frac{(1-a)^{n-1}}{2 b^{n-2}} & 0  &\cdots &0 &\cdots & 0\\
 -\frac{(1-a)^{n-1}}{2b^{n-2}} &1 &\cdots &0 &\cdots & 0\\
 \vdots & \vdots &\cdots & \vdots &\cdots & \vdots\\
 -\frac{(1-a)^{j}}{2b^{j-1}} & 0 &\cdots &1  &\cdots & 0\\
 \vdots & \vdots & \cdots & \vdots  &\cdots & \vdots\\
-\frac{(1-a)^{2}}{2b} & 0 &\cdots &0  &\cdots & 1\\
\end{array}
\right).
$$
Therefore, the matrices corresponding to the generators $s_1,\dots,s_{n-1}$ relative to the basis $B$ are as follows.
$$s_1\mapsto S_1=P^{-1}\tilde{\xi}_1(s_1)P=
\left(
\begin{array}{cccc}
 -1 & 0 & \cdots & 0 \\
 0 & 1 & \cdots & 0  \\
 \vdots & \vdots & \ddots & \vdots  \\
 0 & 0 & \cdots & 1 \\
\end{array}
\right),$$ 
$$s_j\mapsto S_j=P^{-1}\tilde{\xi}_1(s_j)P \quad \text{ for } 2\leqslant j\leqslant n-1.$$ 

In the sequel, we provide the explicit form of the matrices $S_j$, ($2\leqslant j\leqslant n-1$). For this purpose, we use the matrix 
$M$ introduced in Theorem \ref{TheoTn}, namely
{
\setlength{\abovedisplayskip}{10pt}
\setlength{\belowdisplayskip}{10pt}
$$M=\begin{pmatrix}
a &b\\
\dfrac{1-a^2}{b} & -a
\end{pmatrix}.$$
}

The first column of the matrix  $S_2$ is given by:

{
\setlength{\abovedisplayskip}{10pt}
\setlength{\belowdisplayskip}{10pt}
\begin{align*}
S_2e_1= & P^{-1}\tilde{\xi}_1(s_2)Pe_1\\
= &(I_{n-1}-\frac{1}{w_1}ve_1^{T})(M\oplus I_{n-3})(I_{n-1}+ve_1^T)e_1\\
=& (I_{n-1}-\frac{1}{w_1}ve_1^{T})(M\oplus I_{n-3})(e_1+v)\\
=& (I_{n-1}-\frac{1}{w_1}ve_1^{T})(M\oplus I_{n-3})w\\
= & (I_{n-1}-\frac{1}{w_1}ve_1^{T})
\left(
\begin{array}{c}
aw_1+bw_2\\
\frac{1-a^2}{b}w_1-aw_2\\
w_3\\
\vdots\\
w_{n-1}
\end{array}
\right)\\
= & 
\left(
\begin{array}{c}
aw_1+bw_2\\
\frac{1-a^2}{b}w_1-aw_2\\
w_3\\
\vdots\\
w_{n-1}
\end{array}
\right)-
\frac{aw_1+bw_2}{w_1}(w-e_1)\\
= &
\left(
\begin{array}{c}
aw_1+bw_2\\
\frac{1-a^2}{b}w_1-aw_2\\
w_3\\
\vdots\\
w_{n-1}
\end{array}
\right)-
\frac{aw_1+bw_2}{w_1}\left(
\begin{array}{c}
w_1-1\\
w_2\\
w_3\\
\vdots\\
w_{n-1}
\end{array}
\right)\\
= &
\left(
\begin{array}{c}
\frac{1}{2} \left(a^2+1\right)\\
\frac{(3+a^2)(1+a)}{2(1-a)^3}\\
\frac{(1+a)b^{n-4}}{2(1-a)^{n-5}}\\
\vdots\\
\frac{(1+a)b^{n-j-1}}{2(1-a)^{n-j-2}}\\
\vdots\\
\frac{(1+a)}{2(1-a)^{-1}}
\end{array}
\right), \text{ where } 3\leqslant j\leqslant n-1.
\end{align*}
}

The second column of $S_2$ is given by:
{
\setlength{\abovedisplayskip}{10pt}
\setlength{\belowdisplayskip}{10pt}
\begin{align*}
S_2e_2= & P^{-1}\tilde{\xi}_1(s_2)Pe_2\\
= &(I_{n-1}-\frac{1}{w_1}ve_1^{T})(M\oplus I_{n-3})(I_{n-1}+ve_1^T)e_2\\
=& (I_{n-1}-\frac{1}{w_1}ve_1^{T})(M\oplus I_{n-3})(e_2+0)\\
=& (I_{n-1}-\frac{1}{w_1}ve_1^{T})(M\oplus I_{n-3})e_2
\end{align*}
\begin{align*}
=& (I_{n-1}-\frac{1}{w_1}ve_1^{T})
\left(
\begin{array}{c}
b\\
-a\\
0\\
\vdots\\
0
\end{array}
\right)\\
= & 
\left(
\begin{array}{c}
b\\
-a\\
0\\
\vdots\\
0
\end{array}
\right)-
\frac{b}{w_1}(w-e_1)\\
= &
\left(
\begin{array}{c}
b\\
-a\\
0\\
\vdots\\
0
\end{array}
\right)-
\frac{b}{w_1}\left(
\begin{array}{c}
w_1-1\\
w_2\\
w_3\\
\vdots\\
w_{n-1}
\end{array}
\right)\\
= &
\left(
\begin{array}{c}
\frac{b}{w_1}\\
-a-\frac{bw_2}{w_1}\\
-\frac{bw_3}{w_1}\\
\vdots\\
-\frac{bw_j}{w_1}\\
\vdots\\
-\frac{bw_{n-1}}{w_1}
\end{array}
\right)\\
= &
\left(
\begin{array}{c}
\frac{(1-a)^{n-1}}{2b^{n-3}}\\
\frac{-1-a^2}{2}\\
-\frac{(1-a)^3}{2b}\\
\vdots\\
-\frac{(1-a)^j}{2b^{j-2}}\\
\vdots\\
-\frac{(1-a)^{n-1}}{2b^{n-3}}
\end{array}
\right), \text{ where } 3\leqslant j\leqslant n-1.
\end{align*}
}

\noindent It remains to compute the $j$-th column of the matrix $S_2$, where $3\leqslant j\leqslant n-1$.

{
\setlength{\abovedisplayskip}{2pt}
\setlength{\belowdisplayskip}{10pt}
\begin{align*}
S_2e_j = &(I_{n-1}-\frac{1}{w_1}ve_1^{T})(M\oplus I_{n-3})(I_{n-1}+ve_1^T)e_j\\
=& (I_{n-1}-\frac{1}{w_1}ve_1^{T})(M\oplus I_{n-3})(e_j+0)\\
=& (I_{n-1}-\frac{1}{w_1}ve_1^{T})e_j\\
=& e_j.
\end{align*}}
Therefore the matrix $S_2$ is written, relative to the basis $B$, as follows.
{
\setlength{\abovedisplayskip}{15pt}
\setlength{\belowdisplayskip}{10pt}
$$S_2=\left(
\begin{array}{ccccccc}
\frac{1}{2} \left(a^2+1\right)& \frac{(1-a)^{n-1}}{2b^{n-3}} & 0 & \cdots & 0 & \cdots & 0\\
\frac{(3+a^2)(1+a)b^{n-3}}{2(1-a)^{n-2}} & \frac{-1-a^2}{2} & 0 & \cdots & 0 & \cdots & 0\\
\frac{(1+a)b^{n-4}}{2(1-a)^{n-5}} & -\frac{(1-a)^3}{2b} & 1 & \cdots & 0 & \cdots & 0\\
\vdots & \vdots & \vdots &\ddots & 0 & \cdots & \vdots \\
\frac{(1+a)b^{n-j-1}}{2(1-a)^{n-j-2}} & -\frac{(1-a)^j}{2b^{j-2}} & 0 & \cdots & 1 & \cdots &0\\
\vdots & \vdots & \vdots & \cdots & \vdots & \ddots & \vdots\\
\frac{(1+a)}{2(1-a)^{-1}} & -\frac{(1-a)^{n-1}}{2b^{n-3}} & 0 & \cdots & 0 & \cdots & 1
\end{array}
\right).
$$\\\\
Now, we compute the matrices $S_j$ relative to the basis $B$ for $3\leqslant j\leqslant n-1$.
$$ S_j=P^{-1}\tilde{\xi}_1(s_j)P=(I_{n-1}-\frac{1}{w_1}ve_1^{T})(I_{j-2}\oplus M\oplus I_{n-j-1})(I_{n-1}+ve_1^T).$$}

{\setlength{\abovedisplayskip}{2pt}
\setlength{\belowdisplayskip}{10pt}
The first column of the matrix $S_j$ is given as follows.

\begin{align*}
S_je_1 = & (I_{n-1}-\frac{1}{w_1}ve_1^{T})(I_{j-2}\oplus M\oplus I_{n-j-1})(I_{n-1}+ve_1^T)e_1\\
=& (I_{n-1}-\frac{1}{w_1}ve_1^{T})(I_{j-2}\oplus M\oplus I_{n-j-1})(e_1+v)\\
=& (I_{n-1}-\frac{1}{w_1}ve_1^{T})(I_{j-2}\oplus M\oplus I_{n-j-1})w\\
=& (I_{n-1}-\frac{1}{w_1}ve_1^{T})
\left(
\begin{array}{c}
w_1\\
\vdots\\
w_{j-2}\\
aw_{j-1}+bw_{j}\\
\frac{1-a^2}{b}w_{j-1}-aw_{j}\\
w_{j+1}\\
\vdots\\
w_{n-1}
\end{array}
\right)\\
\end{align*}}

{
\setlength{\abovedisplayskip}{10pt}
\setlength{\belowdisplayskip}{10pt}
\begin{align*}
= & 
\left(
\begin{array}{c}
w_1\\
\vdots\\
w_{j-2}\\
aw_{j-1}+bw_{j}\\
\frac{1-a^2}{b}w_{j-1}-aw_{j}\\
w_{j+1}\\
\vdots\\
w_{n-1}
\end{array}
\right)-\frac{1}{w_1}(w_1)v\\
= &
\left(
\begin{array}{c}
w_1\\
\vdots\\
w_{j-2}\\
aw_{j-1}+bw_{j}\\
\frac{1-a^2}{b}w_{j-1}-aw_{j}\\
w_{j+1}\\
\vdots\\
w_{n-1}
\end{array}
\right)
-
\left(
\begin{array}{c}
w_1-1\\
w_2\\
\vdots\\
w_{j}\\
\vdots\\
w_{n-1}
\end{array}
\right)\\
= &
\left(
\begin{array}{c}
1\\
0\\
\vdots\\
0\\
(a-1)w_{j-1}+bw_{j}\\
\frac{1-a^2}{b}w_{j-1}-(a+1)w_{j}\\
0\\
\vdots\\
0
\end{array}
\right)\\
= & 
\left(
\begin{array}{c}
1\\
0\\
\vdots\\
0
\end{array}
\right)\\
= & e_1.
\end{align*}}

Now we compute the $(j-1)$-th column of $S_j$.

{
\setlength{\abovedisplayskip}{10pt}
\setlength{\belowdisplayskip}{10pt}
\begin{align*}
S_je_{j-1}= &(I_{n-1}-\frac{1}{w_1}ve_1^{T})(I_{j-2}\oplus M\oplus I_{n-j-1})(I_{n-1}+ve_1^T)e_{j-1}\\
=& (I_{n-1}-\frac{1}{w_1}ve_1^{T})(I_{j-2}\oplus M\oplus I_{n-j-1})e_{j-1}
\end{align*}}
{
\setlength{\abovedisplayskip}{10pt}
\setlength{\belowdisplayskip}{10pt}
\begin{align*}
=& (I_{n-1}-\frac{1}{w_1}ve_1^{T})
\left(
\begin{array}{c}
0\\
\vdots\\
0\\
a \\
\frac{1-a^2}{b}\\
0\\
\vdots\\
0
\end{array}
\right)\\
= & 
\left(
\begin{array}{c}
0\\
\vdots\\
0\\
a \\
\frac{1-a^2}{b}\\
0\\
\vdots\\
0
\end{array}
\right)\\
= & ae_{j-1}+\frac{1-a^2}{b}e_j.
\end{align*} 
}

Similarly, we calculate the $j$-th column of $S_j$.

{
\setlength{\abovedisplayskip}{10pt}
\setlength{\belowdisplayskip}{10pt}
\begin{align*}
S_je_{j} = &(I_{n-1}-\frac{1}{w_1}ve_1^{T})(I_{j-2}\oplus M\oplus I_{n-j-1})(I_{n-1}+ve_1^T)e_{j}\\
=& (I_{n-1}-\frac{1}{w_1}ve_1^{T})(I_{j-2}\oplus M\oplus I_{n-j-1})e_{j}\\
=& (I_{n-1}-\frac{1}{w_1}ve_1^{T})
\left(
\begin{array}{c}
0\\
\vdots\\
0\\
b \\
-a\\
0\\
\vdots\\
0
\end{array}
\right)
\end{align*}}
{
\setlength{\abovedisplayskip}{10pt}
\setlength{\belowdisplayskip}{10pt}
\begin{align*}
= & 
\left(
\begin{array}{c}
0\\
\vdots\\
0\\
b \\
-a\\
0\\
\vdots\\
0
\end{array}
\right)\\
= & be_{j-1}-ae_j.
\end{align*}
}
The remaining columns of $S_j$ are given by $S_je_k$ for $2\leqslant k\leqslant n-1$ with $k\not= j, j-1$.

\begin{align*}
S_je_{k}= & P^{-1}\tilde{\xi}_1(s_j)Pe_{k}\\
= &(I_{n-1}-\frac{1}{w_1}ve_1^{T})(I_{j-2}\oplus M\oplus I_{n-j-1})(I_{n-1}+ve_1^T)e_k\\
=& (I_{n-1}-\frac{1}{w_1}ve_1^{T})(I_{j-2}\oplus M\oplus I_{n-j-1})e_k\\
=& (I_{n-1}-\frac{1}{w_1}ve_1^{T})e_k\\
= & e_k-0\\
= &  e_k.
\end{align*}
Therefore the matrix $S_j$ is written relative to the basis $B$ as
$$S_j=I_{j-2}\oplus M\oplus I_{n-j-1}=\tilde{\xi}_1(s_j)\;\;\text{ for } 3\leqslant j\leqslant n-1.$$

\vspace{0.1cm}

Throughout the next propositions, vectors and matrices are considered relative to the basis $B$ introduced earlier.

\begin{proposition}\label{prop e1}
Suppose that $a\not\in\{1,-1\}$, then any invariant subspace of the representation $\tilde{\xi}_1$  containing $e_1$  must also contain the following $n-3$ linearly independent vectors \vspace*{-0.2cm}
$$v_1=-be_2+(a+1)e_3,\; v_2=-be_3+(a+1)e_4,\;\dots,\;v_{n-3}=be_{n-2}+(a+1)e_{n-1}.$$
\end{proposition}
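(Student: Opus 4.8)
The plan is to use the fact that, relative to the basis $B$, every generator $S_j$ with $3\leqslant j\leqslant n-1$ fixes $e_1$, so the only generator able to move an invariant subspace off the line $\langle e_1\rangle$ is $S_2$. Accordingly, I would first apply $S_2$ to $e_1$ and remove the $e_1$-component, obtaining
$$w'=S_2e_1-\tfrac{1}{2}(a^2+1)e_1\in U,$$
a vector lying in $\langle e_2,\ldots,e_{n-1}\rangle$ whose coordinates $c_2,\ldots,c_{n-1}$ are read off directly from the explicit first column of $S_2$ computed above.

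The key observation is a rank-one factorization of $S_j-I$. For $3\leqslant j\leqslant n-1$ the matrix $S_j-I$ vanishes outside the coordinates $j-1,j$ and restricts there to $M-I_2$, which is singular (since $1$ is an eigenvalue of $M$) with image the line through $(-b,\,a+1)^T$, i.e.\ the $(j-1,j)$-block of $v_{j-2}$. A short computation then gives, for every $x=\sum_i d_ie_i$,
$$(S_j-I)x=\Big(\tfrac{1-a}{b}\,d_{j-1}-d_j\Big)\,v_{j-2}.$$
Because $U$ is invariant, applying $S_j-I$ to any element of $U$ returns a scalar multiple of $v_{j-2}$ that again lies in $U$, so the whole proposition reduces to checking that the relevant scalars are nonzero.

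First I would hit $w'$ with $S_3-I$; only the entries $c_2,c_3$ enter, and after the simplification $(3+a^2)-(1-a)^2=2(1+a)$ the scalar collapses to a nonzero multiple of $(1+a)^2$, which is nonzero since $a\neq-1$, giving $v_1\in U$. I would then propagate down the chain by induction on $k$: assuming $v_k\in U$, its $(k+2,k+3)$-block is $(a+1,0)$, so
$$(S_{k+3}-I)v_k=\tfrac{1-a^2}{b}\,v_{k+1}\in U,$$
and $\tfrac{1-a^2}{b}\neq0$ as $a\neq\pm1$ and $b\neq0$. Running $k$ from $1$ to $n-4$ produces $v_2,\ldots,v_{n-3}$, hence all of $v_1,\ldots,v_{n-3}\in U$. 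Their linear independence is immediate from the staggered supports: $e_2$ appears only in $v_1$ (with coefficient $-b\neq0$), then $e_3$ only in $v_1,v_2$, and so on, so any vanishing combination forces the coefficients to die successively.

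The main obstacle is the base step. The coordinate $c_2$ of $w'$ is the unique entry of the first column of $S_2$ carrying the anomalous factor $3+a^2$ and failing the geometric pattern shared by $c_3,\ldots,c_{n-1}$; consequently the cancellation that makes $(S_j-I)w'=0$ for all $j\geqslant4$ (where $w'$ restricts on each block to a $+1$-eigenvector of $M$) breaks down precisely at $j=3$. One must therefore verify through the displayed simplification that the $j=3$ scalar is a genuine nonzero multiple of $(1+a)^2$ rather than vanishing. Once $v_1$ is secured this way, the inductive propagation is uniform and routine.
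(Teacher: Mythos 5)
Your proof is correct and follows essentially the same route as the paper's: both form $f=S_2e_1-\tfrac{1}{2}(a^2+1)e_1$, extract $v_1$ as a nonzero multiple of $(S_3-I)f$ (where the factor $(1+a)^2$ makes the hypothesis $a\neq-1$ enter), and then propagate $v_{k+1}$ as a nonzero multiple of $(S_{k+3}-I)v_k$ by induction. The only difference is presentational: your rank-one factorization $(S_j-I)x=\bigl(\tfrac{1-a}{b}\,d_{j-1}-d_j\bigr)v_{j-2}$ packages into a single formula the entrywise matrix--vector computations that the paper carries out explicitly at each step.
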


\begin{proof}
Let $V$ be an invariant subspace of $\tilde{\xi}_1$ such that $e_1\in V$. 
Then the vector $S_2e_1$ must be in $V$.
$$S_2e_1=\left(
\begin{array}{c}
 \frac{1}{2}(a^2+1) \\
 \frac{(3+a^2)(1+a)b^{n-3}}{2(1-a)^{n-2}} \\
 \frac{(1+a)b^{n-4}}{2(1-a)^{n-5}} \\
 \vdots\\
 \frac{(1+a)b^{n-j-1}}{2(1-a)^{n-j-2}}\\
 \vdots\\
   \frac{(1+a)}{2(1-a)^{-1}}\\
\end{array}
\right)
= \frac{1}{2}(a^2+1)e_1+\frac{(3+a^2)(1+a)b^{n-3}}{2(1-a)^{n-2}}e_2+
\sum_{j=3}^{n-1}\frac{(1+a)b^{n-j-1}}{2(1-a)^{n-j-2}}e_j.$$
Since $e_1\in V$ and $S_2e_1\in V$, it follows that the vector $f=S_2e_1-\frac{1}{2}(a^2+1)e_1\in V.$
The vector $f$ is simplified as follows.
 $$f=\frac{(3+a^2)(1+a)b^{n-3}}{2(1-a)^{n-2}}e_2+
\sum_{j=3}^{n-1}\frac{(1+a)b^{n-j-1}}{2(1-a)^{n-j-2}}e_j.
$$
Again, since $f\in V$ and $V$ is invariant subspace, it follows that the  vector $S_3f-f$ must be in $V.$
\begin{align*}
S_3f & =S_3\left(
\frac{(3+a^2)(1+a)b^{n-3}}{2(1-a)^{n-2}}e_2+
\sum_{j=3}^{n-1}\frac{(1+a)b^{n-j-1}}{2(1-a)^{n-j-2}}e_j
\right)\\\\
& =(I_{1}\oplus M\oplus I_{n-4})\left(
\frac{(3+a^2)(1+a)b^{n-3}}{2(1-a)^{n-2}}e_2+
\sum_{j=3}^{n-1}\frac{(1+a)b^{n-j-1}}{2(1-a)^{n-j-2}}e_j
\right)\\\\
 & = \left(M\left(
\begin{array}{c}
\frac{(3+a^2)(1+a)b^{n-3}}{2(1-a)^{n-2}}\\\\
\frac{(1+a)b^{n-4}}{2(1-a)^{n-5}}
\end{array}
\right)\right)_1e_2
+\left(M\left(
\begin{array}{c}
\frac{(3+a^2)(1+a)b^{n-3}}{2(1-a)^{n-2}}\\\\
\frac{(1+a)b^{n-4}}{2(1-a)^{n-5}}
\end{array}
\right)\right)_2e_3
+\sum_{j=4}^{n-1}\frac{(1+a)b^{n-j-1}}{2(1-a)^{n-j-2}}e_j\\\\
 & =
\left(\left(\begin{array}{c}
a\frac{(3+a^2)(1+a)b^{n-3}}{2(1-a)^{n-2}}+b\frac{(1+a)b^{n-4}}{2(1-a)^{n-5}}\\
\frac{1-a^2}{b}\frac{(3+a^2)(1+a)b^{n-3}}{2(1-a)^{n-2}}-a(\frac{(1+a)b^{n-4}}{2(1-a)^{n-5}})
\end{array}
\right)\right)_1e_2\\\\
&\:\:\:\:
+\left(\left(\begin{array}{c}
a\frac{(3+a^2)(1+a)b^{n-3}}{2(1-a)^{n-2}}+b\frac{(1+a)b^{n-4}}{2(1-a)^{n-5}}\\
\frac{1-a^2}{b}\frac{(3+a^2)(1+a)b^{n-3}}{2(1-a)^{n-2}}-a(\frac{(1+a)b^{n-4}}{2(1-a)^{n-5}})
\end{array}
\right)\right)_2e_3
+\sum_{j=4}^{n-1}\frac{(1+a)b^{n-j-1}}{2(1-a)^{n-j-2}}e_j\\\\
& = \frac{(1+a)(1+3a^2)b^{n-3}}{2(1-a)^{n-2}}e_2 +\frac{(1+a)(3a^2+2a+3)b^{n-4}}{2(1-a)^{n-3}}e_3 +\sum_{j=4}^{n-1}\frac{(1+a)b^{n-j-1}}{2(1-a)^{n-j-2}}e_j.
\end{align*}
\vspace*{-0.15cm}
Therefore,
\vspace*{-0.15cm}
$$S_3f-f=\left(
\begin{array}{c}
 0 \\
 -\frac{(1+a)^2 b^{n-3}}{(1-a)^{n-3}} \\
 \frac{(1+a)^3 b^{n-4}}{(1-a)^{n-3}} \\
 0 \\
 \vdots \\
 0 \\
\end{array}
\right)\in V.\vspace*{-0.15cm}$$
Hence, the vector $v_1=\frac{(1-a)^{n-3}}{b^{n-4}(1+a)^2}(S_3f-f)=-be_2+(1+a)e_3\in V$. Then, direct computations lead to the following.\vspace*{-0.1cm}
$$S_4v_1-v_1=(I_{2}\oplus M\oplus I_{n-5}-I_{n-1})
\left(\begin{array}{c}
0\\
-b\\
1+a\\
0\\
\vdots\\
0
\end{array}
\right)=
\left(\begin{array}{c}
0\\
0\\
(a-1)(1+a)\\
\frac{(1-a^2)(1+a)}{b}\\
0\\
\vdots\\
0
\end{array}
\right)\in V.\vspace*{-0.1cm}
$$
Thus, $v_2=\frac{b}{(1-a)(1+a)}(S_4v_1-v_1)=-be_3+(1+a)e_4\in V$.
Now assume that $v_k=-be_{k+1}+(1+a)e_{k+2}\in V$ for $1\leqslant k<n-3.$ 
Direct computations lead to the following.
$$S_{k+3}v_k-v_k=(I_{k+1}\oplus M\oplus I_{n-k-4}-I_{n-1})
\left(\begin{array}{c}
0\\
\vdots\\
0\\
-b\\
1+a\\
0\\
\vdots\\
0
\end{array}
\right)=
\left(\begin{array}{c}
0\\
\vdots\\
0\\
(a-1)(1+a)\\
\frac{(1-a^2)(1+a)}{b}\\
0\\
\vdots\\
0
\end{array}
\right)\in V.
$$
Hence, $v_{k+1}=\frac{b}{(1-a)(1+a)}(S_{k+3}v_k-v_k)=-be_{k+2}+(1+a)e_{k+3}\in V$.
Therefore, by finite mathematical induction, we conclude that 
$v_k=-be_{k+1}+(1+a)e_{k+1}\in V$ for all $1\leqslant k\leqslant n-3$.
It is clear that these $(n-3) $ vectors are linearly independent.
\end{proof}

\vspace*{0.1cm}

We now introduce a lemma that we need in the rest of the paper.

\begin{lemma}\label{det}
Let $M_n=(m_{ij})_{i,j=1}^{n}$ be an $n\times n$ matrix with 
$$m_{i1}=x_i\ (1\leqslant i\leqslant n), \ m_{12}=1, \text{ and for  }\  2\leqslant i\leqslant n, \;\; 3\leqslant j\leqslant n,$$
$$ m_{ij}=\left\{\begin{array}{cr}
y_2 & j=i\\
y_1 & j=i+1\\
0 & \text{ otherwise}
\end{array}\right..
$$
That is 
$$ M_n=\begin{pmatrix}
x_1 & 1 & 0 & 0 & \cdots & 0\\
x_2 & 0 & y_1 & 0 & \cdots & 0\\
x_3 & 0 & y_2 & y_1 &\cdots & 0\\
x_4 & 0 & 0 & y_2 & \ddots& \vdots\\
\vdots & \vdots & \ddots & \ddots &\ddots & y_1\\
x_n & 0 & \cdots & 0 &0& y_2
\end{pmatrix}.$$ 
Then the determinant of $M_n$ is given by
\[
\boxed{\;\det M_n=\sum_{k=2}^{n}(-1)^{k+1}x_k\,y_1^{\,k-2}y_2^{\,n-k}\;.}
\]
\end{lemma}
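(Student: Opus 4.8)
The plan is to prove the formula by induction on $n$, exploiting the self-similarity of $M_n$ under deletion of its last row and last column. For the base case I would take $n=2$, where $M_2=\left(\begin{smallmatrix} x_1 & 1\\ x_2 & 0\end{smallmatrix}\right)$ has determinant $-x_2$, which is exactly the single ($k=2$) term of the claimed sum. Note that $n=1$ cannot serve as a base case, since the right-hand side is an empty sum while $\det M_1=x_1$; this is why the induction must start at $n=2$.

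For the inductive step I would perform a cofactor expansion of $\det M_n$ along the last row, which contains only the two nonzero entries $x_n$ (in column $1$) and $y_2$ (in column $n$), producing exactly two terms. The $(n,n)$-cofactor is controlled by the minor obtained by deleting the last row and last column; by the self-similar structure this minor is precisely $M_{n-1}$ (same $y_1,y_2$, first column $x_1,\dots,x_{n-1}$), contributing $y_2\,\det M_{n-1}$. The $(n,1)$-cofactor is controlled by the minor obtained by deleting the last row and the first column: after column $1$ is discarded, the surviving columns are $e_1$ (the old column $2$) followed by an upper bidiagonal block with $y_1$ on the superdiagonal and $y_2$ on the diagonal, and the boundary of this pattern forces the minor to be triangular with determinant $y_1^{\,n-2}$. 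Carrying the cofactor sign $(-1)^{n+1}$ then yields the recursion $\det M_n=(-1)^{n+1}x_n\,y_1^{\,n-2}+y_2\,\det M_{n-1}$.

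It remains to feed the inductive hypothesis $\det M_{n-1}=\sum_{k=2}^{n-1}(-1)^{k+1}x_k\,y_1^{\,k-2}y_2^{\,n-1-k}$ into this recursion. Multiplication by $y_2$ raises each exponent of $y_2$ by one, converting the sum into $\sum_{k=2}^{n-1}(-1)^{k+1}x_k\,y_1^{\,k-2}y_2^{\,n-k}$, while the leftover term $(-1)^{n+1}x_n\,y_1^{\,n-2}$ is exactly the missing $k=n$ summand (here $y_2^{\,n-k}=y_2^{0}$). Combining the two gives $\sum_{k=2}^{n}(-1)^{k+1}x_k\,y_1^{\,k-2}y_2^{\,n-k}$, closing the induction and establishing the boxed formula.

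The base case and the exponent bookkeeping are routine; the one step that genuinely requires care — and which I expect to be the \emph{main obstacle} — is the evaluation of the off-diagonal $(n,1)$-cofactor. There one must correctly read off the minor after removing the first column, check that the bidiagonal pattern degenerates at its boundary so that the determinant collapses to the clean monomial $y_1^{\,n-2}$, and pin down the sign $(-1)^{n+1}$. As an alternative that avoids induction altogether, one may compute $\det M_n$ by a single double expansion: first along column $2$, whose unique nonzero entry $m_{12}=1$ reduces the problem to an $(n-1)\times(n-1)$ determinant, and then along the first column of that matrix, where each minor splits into a block-diagonal product $y_1^{\,i-1}y_2^{\,n-1-i}$; reindexing by $k=i+1$ recovers the same sum.
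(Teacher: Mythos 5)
Your proof is correct, but it follows a genuinely different route from the paper. You induct on $n$: expanding along the last row (whose only nonzero entries are $x_n$ and $y_2$) gives the recursion $\det M_n=(-1)^{n+1}x_n\,y_1^{\,n-2}+y_2\det M_{n-1}$, and both minors you need are clean — the $(n,n)$-minor is literally $M_{n-1}$ (the last row's $y_1$ lies in the deleted column, so the self-similarity is exact), while the $(n,1)$-minor becomes lower triangular with diagonal $(1,y_1,\dots,y_1)$ after the column shift, hence equals $y_1^{\,n-2}$; feeding the hypothesis into the recursion closes the induction with correct signs and exponents. The paper instead does a single cofactor expansion along the \emph{first} column: the $k=1$ minor vanishes (its first column is zero), and for $k\geqslant 2$ each minor is expanded along its first column (which is $e_1$), leaving an $(n-2)\times(n-2)$ block-diagonal bidiagonal matrix whose determinant is $y_1^{\,k-2}y_2^{\,n-k}$, so all terms of the sum appear at once. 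The trade-off: the paper must evaluate a general two-parameter minor, which requires seeing that the bidiagonal pattern splits into two decoupled triangular blocks at the deleted row $k$; your recursion avoids any general minor at the cost of an induction, and the only delicate step — exactly as you anticipated — is the $(n,1)$-cofactor, which you handle correctly (your phrase ``upper bidiagonal with $y_1$ on the superdiagonal'' describes positions in the original matrix; after discarding column $1$ the pattern shifts to lower triangular, which is what your determinant $y_1^{\,n-2}$ uses). Your closing remark about a double expansion along column $2$ and then the first column is essentially the paper's argument performed in the opposite order.
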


\begin{proof}
Denote the minors of $M$ by $\det M_n^{(k)}$, $1\leqslant k\leqslant n$. The determinant of $M_n$ is 
\[\det M_n=\sum_{k=1}^n(-1)^{k+1}x_k\det M_n^{(k)}.
\]
For \(k=1\), the minor \(\det M_n^{(1)}=0\) (column~2 becomes \(e_1\) in the \((n-1)\times(n-1)\) minor).  
For \(k\ge2\), expand \(\det M_n^{(k)}\) along its first column (which has a single 1 in the top entry); the remaining \((n-2)\times(n-2)\) matrix is lower-bidiagonal with \(k-2\) factors \(y_1\) and \(n-k\) factors \(y_2\), hence \(\det M_n^{(k)}=y_1^{\,k-2}y_2^{\,n-k}\).  
Therefore
\[
\det M_n=\sum_{k=2}^{n}(-1)^{k+1}x_k\,y_1^{\,k-2}y_2^{\,n-k}.
\]
\end{proof}

\begin{proposition}\label{prop2 e1}
Suppose that $a\not\in\{1,-1\}$. The invariant subspaces of  $\tilde{\xi}_1$ that contain $e_1$ are proper subspaces if and only if $a$ is a root of the polynomial
$$P(t)=4(1+t^2) + \frac{(1-t)^4}{2t} \left( 1 - \left( \frac{1-t}{1+t} \right)^{n-4} \right).$$
\end{proposition}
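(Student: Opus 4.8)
The plan is to prove that \emph{some} proper invariant subspace contains $e_1$ exactly when $P(a)=0$, by showing that the minimal invariant subspace generated by $e_1$ is proper precisely under this condition, and by extracting that condition from the determinant Lemma \ref{det}. First I would let $V$ be the smallest $\tilde\xi_1$-invariant subspace containing $e_1$. By Proposition \ref{prop e1}, $V$ contains $v_1,\dots,v_{n-3}$, so $V\supseteq W:=\langle e_1,v_1,\dots,v_{n-3}\rangle$. Since the $v_k=-be_{k+1}+(1+a)e_{k+2}$ are linearly independent and involve only $e_2,\dots,e_{n-1}$, we have $\dim W=n-2$, so $W$ is a hyperplane. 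Hence $V$ equals either $W$ (and is proper) or all of $\mathbb{C}^{n-1}$, and $V=W$ iff $W$ is invariant. Thus the whole statement reduces to deciding when $W$ is invariant.

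Next I would observe that $W$ is automatically stable under every generator except possibly $S_2$. Indeed $S_1=\mathrm{diag}(-1,1,\dots,1)$ fixes $e_1$ and each $v_k$, while for $3\le j\le n-1$ the block form $S_j=I_{j-2}\oplus M\oplus I_{n-j-1}$ shows, by the same two-step computation used in Proposition \ref{prop e1}, that $S_j$ carries each $v_k$ into $W$. Since $S_2$ fixes $e_3,\dots,e_{n-1}$, it fixes $v_2,\dots,v_{n-3}$, and a short computation shows that $S_2v_1\in W$ if and only if $S_2e_1\in W$. Therefore $W$ is invariant $\iff S_2e_1\in W$, and the problem becomes: \emph{when does $S_2e_1$ lie in the hyperplane $W$?}

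The membership $S_2e_1\in W$ is equivalent to the linear dependence of the $n-1$ vectors $S_2e_1,e_1,v_1,\dots,v_{n-3}$, i.e.\ to the vanishing of the determinant $D$ of the matrix whose columns are, in this order, $S_2e_1,\ e_1,\ v_1,\dots,v_{n-3}$. This matrix matches exactly the shape in Lemma \ref{det}: the second column is $e_1$, the columns $v_1,\dots,v_{n-3}$ form the lower-bidiagonal block with superdiagonal entry $y_1=-b$ and diagonal entry $y_2=1+a$, and the first column carries the entries $x_k=(S_2e_1)_k$. Applying the lemma (with $n$ replaced by $n-1$) gives
$$D=\sum_{k=2}^{n-1}(-1)^{k+1}(S_2e_1)_k\,(-b)^{k-2}(1+a)^{\,n-1-k}.$$
Substituting the explicit entries of $S_2e_1$ computed above and factoring out a nonzero scalar (a power of $b$, $(1+a)$ and $(1-a)$), the alternating sum collapses to a geometric series in $q:=\tfrac{1-a}{1+a}$, and $D=0$ becomes the single condition $(1+a)^2\sum_{j=3}^{n-1}q^{\,j}+(3+a^2)=0$.

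The last step, which I expect to be the main obstacle, is the purely algebraic reconciliation of this condition with $P(a)=0$. Summing the geometric series gives $(1+a)^2\sum_{j=3}^{n-1}q^{\,j}=\tfrac{(1-a)^3}{2a}\bigl(1-q^{\,n-3}\bigr)$; then, using $(1+a)q=1-a$ together with the identity $(1-a)^3+(3+a^2)(1+a)=4(1+a^2)$, one rewrites the vanishing condition as
$$4(1+a^2)+\frac{(1-a)^4}{2a}\Bigl(1-q^{\,n-4}\Bigr)=0,$$
which is exactly $P(a)=0$. The bookkeeping here is delicate, since the raw form of $D$ carries a factor $q^{\,n-3}$ and the constant $3+a^2$, and it is precisely these two identities that shift the exponent to $n-4$ and promote the constant to $4(1+a^2)$. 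Because $a\notin\{1,-1\}$ guarantees that every factor cleared along the way is nonzero, the chain $V\text{ proper}\iff S_2e_1\in W\iff D=0\iff P(a)=0$ is reversible, proving both implications at once.
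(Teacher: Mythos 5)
Your overall strategy is the paper's own, almost line for line: the same hyperplane $W=\langle e_1,v_1,\dots,v_{n-3}\rangle$, the same dichotomy ``$V=W$ or $V=\mathbb{C}^{n-1}$'', the same invariance checks for $S_1$ and for $S_j$ with $j\geqslant 3$, the same application of Lemma \ref{det} with $y_1=-b$, $y_2=1+a$, and the same geometric-series collapse to $P(a)$. The one substantive variation is which vector you feed into the determinant: you test $S_2e_1\in W$, while the paper tests $S_2v_1\in W$. Your algebra for that test is correct: the determinant is a nonzero multiple of $(3+a^2)+(1+a)^2\sum_{j=3}^{n-1}q^{\,j}$ with $q=\frac{1-a}{1+a}$, and the identities $(1+a)q=1-a$ and $(3+a^2)(1+a)+(1-a)^3=4(1+a^2)$ do convert its vanishing into $P(a)=0$, exactly as you say.

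The weak point is the step you label ``a short computation shows that $S_2v_1\in W$ if and only if $S_2e_1\in W$.'' The equivalence is true, but it does not follow from the observation preceding it (that $S_2$ fixes $e_3,\dots,e_{n-1}$); verifying it amounts to computing the membership condition for $S_2v_1$ as well, i.e.\ a second determinant (or linear-functional) computation of the same size as the one you carried out --- which is precisely the computation the paper performs. Without it, your backward implication is incomplete: from $P(a)=0$ you obtain $S_2e_1\in W$, but you still need $S_2v_1\in W$ before you may conclude that $W$ is invariant. (Curiously, the paper has the mirror-image omission: it checks $S_2v_1$ and never checks $S_2e_1$, which its backward direction also needs; both conditions do reduce to the same polynomial, so both arguments are completable, but neither is complete as written.) A second, smaller oversight: your closing claim that $a\notin\{1,-1\}$ makes ``every factor cleared along the way'' nonzero ignores $a=0$, where $q=1$ and summing the geometric series (which divides by $1-q=\frac{2a}{1+a}$) is illegitimate. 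That case must be treated separately: there the bracket equals $(3+a^2)+(1+a)^2(n-3)=n\neq 0$, so no proper invariant subspace containing $e_1$ exists, consistent with the statement; the paper handles this case explicitly, and your write-up should too.
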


\begin{proof}
Let $V$ be an invariant subspace such that $e_1\in V$. Then, by Proposition \ref{prop e1}, the vectors $v_k=-be_{k+1}+(1+a)e_{k+2}$, $1\leqslant k\leqslant n-3$, are $n-3$ linearly independent vectors in $V$.
Since $V$ is invariant subspace, it follows that $S_ke_1\in V$ and $S_kv_j\in V$ for all $1\leqslant k\leqslant n-1$ and $1\leqslant j\leqslant n-3$. 
Consider the subspace $W=\langle e_1, v_1,\dots,v_{n-3}\rangle$ of $\mathbb{C}^{n-1}$.
Then either $V=W$ or $V=\mathbb{C}^{n-1}$.
It is clear that $S_1e_1=-e_1\in W$ and $S_1v_j=v_j\in W$ for all $1\leqslant j\leqslant n-3$.
In addition, direct computations yields the following results.
\begin{itemize}[noitemsep, topsep=5pt]
\item $S_2v_j=v_j\in W$ for all $2\leqslant j\leqslant n-3.$
\item $S_kv_{k-3}=v_{k-3}+\frac{1-a^2}{b}v_{k-2}\in W$, for all $4\leqslant k\leqslant n-1$.
\item $S_kv_{k-2}=-v_{k-2}\in W$, for all $3\leqslant k\leqslant n-1$.
\item $S_kv_{k-1}=bv_{k-2}+v_{k-1}\in W$, for all $3\leqslant k\leqslant n-1$.
\item $S_kv_j=v_j\in W$ for all $3\leqslant k\leqslant n-1$ and all $1\leqslant j\leqslant n-1$ with $j\not\in\{k-1,k-2,k-3\}$.
\end{itemize} 
It remains to check the case $S_2v_1$. We have
$$S_2v_1=
\left(
\begin{array}{c}
-\frac{(1-a)^{n-1}}{2b^{n-4}}\\
\frac{(1+a^2)b}{2}\\
\frac{(1-a)^3}{2}+a+1\\
\frac{(1-a)^4}{2b}\\
\vdots\\
\frac{(1-a)^j}{2b^{j-3}}\\
\vdots\\
\frac{(1-a)^{n-1}}{2b^{n-4}}
\end{array}
\right).
$$
Then, $S_2v_1$ belongs to $W$ if and only if $S_2v_1,e_1,v_1,\cdots,v_{n-3}$ are linearly dependent vectors in $\mathbb{C}^{n-1}$. This is equivalent to the following equation.
$$\det(S_2v_1,e_1,v_1,\cdots,v_{n-3})=0.$$
By Lemma \ref{det}, the determinant $\Delta = \det(S_2v_1,e_1,v_1,\cdots,v_{n-3})$ is given by
$$\Delta=\sum_{k=2}^{n-1}(-1)^{k+1}x_k\,(-b)^{k-2}(1+a)^{n-1-k},$$ where $x_k$ is the $k$-th component of $S_2v_1$.
The determinant above can be simplified as follows.

{
\setlength{\abovedisplayskip}{5pt}
\setlength{\belowdisplayskip}{5pt}
\begin{align*}
\Delta&=\sum_{k=2}^{n-1}(-1)^{k+1}x_k\,(-b)^{k-2}(1+a)^{n-1-k}\\
&=\frac{-b}{2}\left[
(1+a^2)(1+a)^{n-3} +((1-a)^3+2a+2)(1+a)^{n-4}\right.\\
&\quad \left.+(1-a)^4(1+a)^{n-5}+\cdots
+(1-a)^k(1+a)^{n-1-k}+\cdots+(1-a)^{n-1}
\right] \\
& =\frac{-b}{2}\left[
(1+a^2)(1+a)^{n-3}+
(3-a)(1+a^2)(1+a)^{n-4}
+\sum_{k=4}^{n-1}(1-a)^k(1+a)^{n-1-k}\right]\\
& = \frac{-b}{2}\left[4(1+a^2)(1+a)^{n-4}+\sum_{k=4}^{n-1}(1-a)^k(1+a)^{n-1-k}\right]\\
& = \frac{-b}{2}(1+a)^{n-4}\left[4(1+a^2)+(1+a)^3\sum_{k=4}^{n-1}\left(\frac{1-a}{1+a}\right)^{k}\right].
\end{align*}}
Now, set $m=k-3$ and $r=\frac{1-a}{1+a}$. Then the geometric sum appeared in the last equation is simplified as follows.

$$\sum_{k=4}^{n-1}\left(\frac{1-a}{1+a}\right)^{k}=\sum_{m=1}^{n-4}r^{m+3} = r^3\sum_{m=1}^{n-4}r^{m} = \left\{
\begin{array}{ll}
r^4\dfrac{1-r^{n-4}}{1-r}&\text{ if } r\not=1\\
n-4 & \text{ if } r=1
\end{array}\right..$$
So,
$$(1+a)^3\sum_{k=4}^{n-1}\left(\frac{1-a}{1+a}\right)^{k} =\left\{
\begin{array}{ll}
\dfrac{(1-a)^{4}}{2a}\left(1-\left(\dfrac{1-a}{1+a}\right)^{n-4}\right) & \text{ if } a\neq0\\
n-4& \text{ if } a=0
\end{array} 
\right..
$$
Hence, 
$$\Delta = \left\{
\begin{array}{ll}
\dfrac{-b}{2}(1+a)^{n-4}\left[4(1+a^2)+\dfrac{(1-a)^{4}}{2a}\left(1-\left(\dfrac{1-a}{1+a}\right)^{n-4}\right)\right] & \text{ if } a\neq0\\
\dfrac{-b}{2}n & \text{ if } a=0
\end{array}\right..$$
Therefore, $S_2v_1\in W$ if and only if $a\neq0$ and 
$$ P(a)=4(1+a^2) + \frac{(1-a)^4}{2a} \left( 1 - \left( \frac{1-a}{1+a} \right)^{n-4} \right)=0.$$
Consequently, $V=W\neq \mathbb{C}^{n-1}$ if and only if $a\neq0$ and $P(a)=0$. 

\end{proof}

\begin{proposition}\label{prop no e1}
Suppose that $a\not=1$. Then any invariant subspace of the representation $\tilde{\xi}_1$ that doesn't contain the vector $e_1=(1,0,\dots,0)^T$ must be the trivial subspace.
\end{proposition}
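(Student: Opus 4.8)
The plan is to show that a nonzero invariant subspace $V$ with $e_1\notin V$ cannot exist; equivalently, I will prove any such $V$ must be $\{0\}$. The first step exploits the diagonal generator $S_1=\mathrm{diag}(-1,1,\dots,1)$. Since $S_1$ is diagonalizable with eigenspaces $E_{-1}=\langle e_1\rangle$ and $E_{1}=\langle e_2,\dots,e_{n-1}\rangle$, and the spectral projections $\tfrac{1}{2}(I-S_1)$ and $\tfrac{1}{2}(I+S_1)$ are polynomials in $S_1$, any $S_1$-invariant subspace splits as $V=(V\cap E_{-1})\oplus(V\cap E_{1})$. The hypothesis $e_1\notin V$ forces $V\cap\langle e_1\rangle=\{0\}$, so I conclude $V\subseteq\langle e_2,\dots,e_{n-1}\rangle$; equivalently, every $x\in V$ has vanishing first coordinate.

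The second step is a base case that eliminates the $e_2$-coordinate. For $x=\sum_{i=2}^{n-1}x_ie_i\in V$, I use that $S_2$ fixes $e_3,\dots,e_{n-1}$ and that the first coordinate of $S_2e_2$ equals $\frac{(1-a)^{n-1}}{2b^{n-3}}$, as read off the explicit column computed above. Hence the first coordinate of $S_2x$ is $x_2\cdot\frac{(1-a)^{n-1}}{2b^{n-3}}$. But $S_2x\in V\subseteq\langle e_2,\dots,e_{n-1}\rangle$ has zero first coordinate, and the factor $\frac{(1-a)^{n-1}}{2b^{n-3}}$ is nonzero precisely because $a\neq1$ and $b\neq0$; therefore $x_2=0$. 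This is the crucial observation: $S_2$ is the only generator that leaks mass into the $e_1$-direction, and this leak is exactly what initiates the descent.

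Finally I would run a finite induction on $m$ from $3$ to $n-1$, with inductive hypothesis that every $x\in V$ satisfies $x_2=\dots=x_{m-1}=0$, i.e. $V\subseteq\langle e_m,\dots,e_{n-1}\rangle$. For such $x$, the generator $S_m=I_{m-2}\oplus M\oplus I_{n-m-1}$ acts only on the $(m-1,m)$-block, so, using $x_{m-1}=0$, a direct computation gives $S_mx-x=b\,x_m\,e_{m-1}-(a+1)\,x_m\,e_m\in V$. Its $(m-1)$-coordinate is $b\,x_m$, which must vanish since $V$ has no $e_{m-1}$-component by the inductive hypothesis; as $b\neq0$ this yields $x_m=0$. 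Completing the induction gives $x_2=\dots=x_{n-1}=0$ for every $x\in V$, whence $V=\{0\}$. The steps are computationally light once the explicit matrices $S_j$ are in hand; the only point requiring genuine care is the base case, namely correctly isolating the $e_1$-leak of $S_2$ and verifying that the relevant coefficient is nonzero under the sole hypothesis $a\neq1$. Note that $a=-1$ is harmless, since the decisive $(m-1)$-coordinate $b\,x_m$ never involves the factor $a+1$.
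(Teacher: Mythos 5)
Your proof is correct and follows essentially the same route as the paper's: restrict $V$ to $\langle e_2,\dots,e_{n-1}\rangle$ using the eigenstructure of $S_1$, kill the $e_2$-coordinate via the first entry of $S_2x$ (nonzero since $a\neq1$, $b\neq0$), then induct along the coordinates using the block action of $S_m$. Your two small deviations—justifying the first step by spectral projections rather than the paper's bare assertion, and working with $S_mx-x$ instead of reading the $(m-1)$-component of $S_mx$ directly—are only cosmetic refinements (the first is in fact slightly more rigorous than the paper's wording).
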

\begin{proof}
Consider an invariant subspace $V$ that does not contain the vector $e_1$. Since $e_1$ is an eigenvector of $\tilde{\xi}_1(s_1)$, it follows that $V$ is spanned by linear combinations of the other eigenvectors of $\tilde{\xi}_1(s_1)$ which are $e_2,\dots, e_{n-1}$. This implies that any vector $v$ in $V$ has the form
$$v=x_2e_2+\cdots +x_{n-1}e_{n-1}.$$
$S_kv\in V$ for all $1\leqslant k\leqslant n-1$ and for all $v\in V.$
The first component of the vector $S_2v$ is $\frac{(1-a)^{n-1}}{2b^{n-3}}x_2$. So, $x_2=0$ because $a\neq 1$. Thus all vectors $v$ in $V$ have the form 
$v=x_3e_3+\cdots +x_{n-1}e_{n-1}$.
Assume, for some $j\in \{2,\dots, n-2\}$, that the $s$-th component of any vector $v$ in $V$ is zero for all $1\leqslant s\leqslant j$.
Then the $j$-th component of $S_{j+1}v$ is $bx_{j+1}$. Thus, by our assumption, $x_{j+1}=0$ because $S_{j+1}v\in V$ and $b\neq0$. Therefore, by finite mathematical induction, all the components $x_k, k\in\{1,2,\dots,n-1\}$ of any vector $v$ in $V$ are zeros. Therefore, $V$ is the trivial subspace.
\end{proof}
Now, we state the main theorem of our work.
\begin{theorem}
The representation $\tilde{\xi}_1$ of the twin groups $T_n$ ($n\geqslant4$) is irreducible if and only if $a\not\in\{1,-1\}$
and $a$ is not a root of:
$$P(t)=4(1+t^2) + \frac{(1-t)^4}{2t} \left( 1 - \left( \frac{1-t}{1+t} \right)^{n-4} \right).$$
\end{theorem}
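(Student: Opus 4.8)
The plan is to deduce the theorem purely as a logical synthesis of Propositions \ref{prop -1,1}, \ref{prop e1}, \ref{prop2 e1}, and \ref{prop no e1}, organizing the argument as a biconditional proved by contraposition and resting on a single dichotomy: whether a candidate invariant subspace contains the distinguished eigenvector $e_1$ of $\tilde{\xi}_1(s_1)$, the unique (up to scale) eigenvector for the eigenvalue $-1$.

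For the direction asserting reducibility under the excluded conditions, I would first dispose of $a\in\{1,-1\}$ by citing Proposition \ref{prop -1,1}, which already exhibits an explicit one-dimensional invariant subspace. If instead $a\notin\{1,-1\}$ but $P(a)=0$, I would invoke Proposition \ref{prop e1} to form the space $W=\langle e_1,v_1,\dots,v_{n-3}\rangle$ of dimension $n-2$, and then Proposition \ref{prop2 e1}, whose criterion identifies $P(a)=0$ precisely as the condition under which $W$ is $\tilde{\xi}_1$-invariant rather than all of $\mathbb{C}^{n-1}$. Since $\dim W=n-2<n-1$, the space $W$ is a proper nontrivial invariant subspace, so $\tilde{\xi}_1$ is reducible.

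For the converse, I would assume $\tilde{\xi}_1$ reducible, fix a proper nontrivial invariant subspace $V$, and may assume $a\notin\{1,-1\}$ (otherwise $a$ already lies in the excluded set and there is nothing to prove). The dichotomy then runs as follows: if $e_1\notin V$, Proposition \ref{prop no e1} forces $V$ to be trivial, contradicting nontriviality; hence $e_1\in V$, and Proposition \ref{prop2 e1} applied to the proper invariant subspace $V\supseteq\langle e_1\rangle$ yields $P(a)=0$. Thus $a$ lies in the excluded set, completing the contrapositive and hence the biconditional.

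The genuine content has already been discharged in the preceding propositions, so the only real task remaining is bookkeeping: verifying that the dichotomy on $e_1\in V$ is exhaustive, and that the hypothesis ``proper, invariant, and containing $e_1$'' matches exactly the standing assumptions of Proposition \ref{prop2 e1}. The one point I would flag explicitly is the boundary value $a=0$: since $P$ carries a factor $1/t$, the value $t=0$ must be understood by continuity, where the removable singularity gives $\lim_{t\to0}P(t)=n\neq0$, matching the separate computation $\Delta=-\tfrac{b}{2}n\neq0$ in the proof of Proposition \ref{prop2 e1}; thus $a=0$ counts as a non-root and the representation is correctly declared irreducible there.
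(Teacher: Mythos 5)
Your proposal is correct and is essentially the paper's own proof: the paper's argument for this theorem consists precisely of citing Propositions \ref{prop -1,1}, \ref{prop e1}, \ref{prop2 e1}, and \ref{prop no e1}, and your dichotomy on whether the invariant subspace contains $e_1$ is exactly the intended synthesis of those results. Your explicit treatment of the boundary value $a=0$ (where the removable singularity gives $\lim_{t\to 0}P(t)=n\neq 0$, consistent with the case $\Delta=-\tfrac{b}{2}n$ in Proposition \ref{prop2 e1}) is a welcome clarification that the paper leaves implicit.
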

\begin{proof}
The proof follows directly from Proposition \ref{prop -1,1}, Proposition \ref{prop e1}, Proposition \ref{prop2 e1}, and Proposition \ref{prop no e1}.
\end{proof}


\vspace{0.2cm}

\end{document}